\documentclass[11pt]{article}

\usepackage[utf8]{inputenc}
\usepackage{algorithm}
\usepackage{algpseudocode}
\usepackage{amsfonts}
\usepackage{amsmath,amssymb}
\usepackage{amsthm} % not for journal version
\usepackage{booktabs}
\usepackage{color}
\usepackage{enumerate}
\usepackage{graphicx}
\usepackage{latexsym}
\usepackage{longtable}
\usepackage{mathtools}
\usepackage{multirow}
\usepackage{setspace}
\usepackage{subfigure}
\usepackage{url}
\usepackage[colorlinks=true,breaklinks=true,bookmarks=true,urlcolor=blue,
     citecolor=blue,linkcolor=blue,bookmarksopen=false,draft=false]{hyperref}

\definecolor{myred}{RGB}{255,50,50}              % revision version
\definecolor{myblack}{RGB}{0,0,0}                % normal version
\newcommand{\red}[1]{\textcolor{myblack}{#1}}

   % minimum argument
\newcommand{\grad}{\nabla}                       % gradient
             % implies
\newcommand{\inner}[2]{\langle#1,#2\rangle}      % inner product
                  % linearity space
\newcommand{\norm}[1]{\|#1\|}                    % norm
\newcommand{\R}{\mathbb{R}}                      % real space
               % if and only if
                    % symmetric matrices space
\newcommand{\T}{\top\hspace{-1pt}}               % transpose
                    % trace	
                     % projection onto S^m_+
                   % Lyapunov operator
                % diagonal
  % adjoint operator

\newcommand{\mini}[1]{\underset{{#1}}{\mathrm{minimize}}\ }

\newcommand{\limit}[1]{\underset{{#1}}{\mathrm{lim}}\ }

%\onehalfspacing
% Biblatex's styles and references

\usepackage[
  backend=biber,
  giveninits=true,
  style=ieee,
  citestyle=numeric-comp,
  sorting=nyt]{biblatex}

\addbibresource{journal-titles.bib}
\addbibresource{references.bib}

% Margins (not for journal version)

\oddsidemargin=21pt
\evensidemargin=21pt   
\textwidth=435pt
\textheight=580pt
\topmargin=0pt
\headsep=15pt

% Theorems (not for journal version)

\newtheorem{theorem}{Theorem}[section]
\newtheorem{lemma}[theorem]{Lemma}
\newtheorem{definition}[theorem]{Definition}

\newtheorem{assumption}[theorem]{Assumption}

\newtheorem{remark}[theorem]{Remark}

% Equations numbers (not for journal version)

\numberwithin{equation}{section}

\begin{document}

%=============================================================================

\title{A second-order sequential optimality condition for\\ nonlinear
    second-order cone programming problems%
    \thanks{This is a pre-print of an article published in Computational Optimization and Applications. The version of record is available online at: \url{https://doi.org/10.1007/s10589-025-00649-0}.}}
\author{
  Ellen H. Fukuda$^{*}$
  \and
  Kosuke Okabe%
  \thanks{Graduate School of Informatics, Kyoto University, Kyoto \mbox{606--8501}, Japan
    (\texttt{ellen@i.kyoto-u.ac.jp}, \texttt{okabe.kosuke.88n@kyoto-u.jp}).}
}

\date{January 23, 2025}

\maketitle

%=============================================================================

\begin{abstract}
    \noindent 
    In the last two decades, the sequential optimality conditions, which do not require constraint qualifications and allow improvement on the convergence assumptions of algorithms, had been considered in the literature. It includes the work by Andreani et al. (2017), with a sequential optimality condition for nonlinear programming, that uses the second-order information of the problem. More recently, Fukuda et al. (2023) analyzed the conditions that use second-order information, in particular for nonlinear second-order cone programming problems (SOCP). However, such optimality conditions were not defined explicitly. In this paper, we propose an explicit definition of approximate-Karush-Kuhn-Tucker 2 (AKKT2) \red{and complementary-AKKT2 (CAKKT2)} conditions for SOCPs. We prove that the proposed \red{AKKT2/CAKKT2} conditions are satisfied at local optimal points of the SOCP without any constraint qualification. We also present two algorithms that are based on augmented Lagrangian and sequential quadratic programming methods and show their global convergence to points satisfying the proposed conditions.\\

    \noindent \textbf{Keywords:} Optimality conditions, second-order optimality, second-order cone programming, conic optimization.
\end{abstract}

%=============================================================================

\section{Introduction}
%% Definition of the SOCP
We consider the following nonlinear second-order cone programming problem (SOCP):
\begin{equation}
  \label{eq:socp}
  \tag{SOCP}
  \begin{array}{ll}
    \mini{x \in \R^n} & {f(x)} \\
    \mathrm{subject\ to} & g_{i}(x) \in \mathcal{K}_{i}, \quad i = 1, \dots, r,  \\
    & h(x) = 0,
  \end{array}
\end{equation}
where $f\colon \R^n \rightarrow \R,\ h\colon \R^n \rightarrow \R^p,\ g_{i}\colon \R^n \rightarrow \R^{m_i},\ i = 1,\dots, r$ are twice continuously differentiable and $\mathcal{K}_{i}$ denotes the $m_i$ dimensional second-order (or Lorentz) cone, which is defined as 
\[
    \mathcal{K}_{i} := \left\{
    \begin{array}{ll}
    \{(w_0, \overline{w}) \in \R \times \R^{m_i - 1} \mid w_0 \geq \norm{\overline{w}}\},
    & \mbox{if } m_i > 1, \\
    \{w \in \R \mid w \geq 0\}, 
    & \mbox{if } m_i = 1, \\
    \end{array}
\right.
\]
with $\norm{\cdot}$ denoting the Euclidean norm. We write the Cartesian product of Lorentz cones by $\mathcal{K} := \mathcal{K}_1 \times \dots \times \mathcal{K}_r$. Also, $g \colon \R^n \to \R^m$ is given by $g = (g_1,\dots,g_r)$ where $m = \sum_{i=1}^r m_i$. Thus, the conic constraints can be written simply as $g(x) \in \mathcal{K}$.

%% Applications, algorithms and optimality conditions for SOCP
The nonlinear SOCP is a particular case of nonlinear symmetric cone programming problem and has a wide range of applications, for instance, in finance, control theory, information theory, and others~\cite{LO98}. Because of its practical importance, the optimization community has studied its related theory~\cite{FF16,AHMRS22-1} and proposed various algorithms for nonlinear SOCPs. Some examples of such algorithms are the sequential quadratic programming (SQP) method~\cite{KA07}, the augmented Lagrangian~\cite{LI08}, the exact penalty~\cite{FSF12}, and the interior point method~\cite{YA09}. Most of these methods generate iterates with limit points satisfying 
the first-order optimality conditions.

%% About existing first-order condition
To solve SOCP, we often aim to find points that satisfy the so-called necessary optimality conditions, which local minimizers of the problem should fulfill. The Karush-Kuhn-Tucker (KKT) conditions are the most well-known first-order necessary optimality conditions, and are satisfied at local optima under some regularity condition which is called constraint qualification (CQ). This means that in the conic context, we usually need to assume, for instance, the Robinson CQ or the nondegeneracy CQ. For optimization problems that lack CQs, it is not possible to guarantee the existence of Lagrange multipliers that satisfy the KKT conditions, and hence it is generally difficult for standard optimization methods to solve them.

%% About first-order sequential optimality condition
Recently, unlike the classical KKT, conditions that use sequences of \red{points} and Lagrange multipliers, called sequential optimality conditions have been proposed. One of such conditions is the Approximate KKT (AKKT) conditions~\cite{AN09}. There are two attractive properties for sequential optimality conditions. First, they are necessary optimality conditions without the requirement of CQs. Second, they have clear relation with the classical optimality conditions, in the sense that they are equivalent when a CQ holds.
%% About second-order sequential optimality condition
In particular, a second-order sequential optimality condition called AKKT2, which \red{incorporates second-order information into the AKKT conditions and is therefore} stronger than AKKT, was introduced in the context of NLP~\cite{AN17}. 
It was also shown that some algorithms generate AKKT2 points. However, it has not been fully extended to SOCP yet.
%% About second-order for SOCP
A second-order analysis for SOCP is more complicated than for NLP, since we should consider the curvature of the second-order cone. Second-order analyses in the conic context were first done by~\cite{KA88}, and later generalized by~\cite{CO90}. Then, second-order necessary optimality conditions for SOCP were proposed in~\cite{BO05}. \red{We refer to~\cite{BS00} for details about optimality conditions for SOCP and general conic optimization.} Moreover, the weak second-order necessary condition (WSONC)~\cite{AN07} was extended to SOCP~\cite{EL20}. \red{Compared to the classical (or basic) second-order necessary condition (BSONC), the WSONC is more practical. In fact, it is known that some algorithms may produce sequences whose limit points fail to satisfy BSONC \cite{AS18,GO99}, whereas WSONC can be fulfilled under weaker assumptions \cite{ABMS10,MP03}.} Although second-order analysis for conic programming was considered in these papers, there is still no formal definition of AKKT2 for those problems.

%% Contribution of the paper
For this reason, in this paper, we formally define the second-order sequential optimality condition AKKT2 for SOCP extending the AKKT2 for NLP. As the AKKT2 for NLP, we prove three properties that are desirable for a second-order sequential optimality condition. First, a local minimum satisfies the condition under a reasonable assumption. Second, if an AKKT2 point fulfills some CQs, it also satisfies WSONC. Third, there exist algorithms that generate AKKT2 points. In particular, we show that with small modifications, an augmented Lagrangian method and a sequential quadratic programming method generate AKKT2 points under appropriate assumptions. \red{We also see that replacing AKKT with another stronger sequential optimality condition called complementary-AKKT (CAKKT)~\cite{AMS10} into our definition of AKKT2 yields the so-called CAKKT2, which generalizes~\cite{Hae18}. Then, similar results are obtained with the new CAKKT2.} The main contribution here, compared to the  work~\cite{EL20} on second-order optimality conditions for conic problems, can be summarized as follows:
\begin{itemize}
\item Without an explicit definition of AKKT2, it may be difficult to propose and analyze algorithms that generate AKKT2 points. Here, differently from~\cite{EL20}, we are able to provide those algorithms.
\item In~\cite{EL20}, by assuming Robinson CQ and the weak constant rank property (WCR), it is proved that a local minimum satisfies WSONC. Here, since we have an explicit definition of AKKT2 we first (a) prove \red{under an assumption that is not CQ} that a local minimum satisfies AKKT2, and then, (b) assuming Robinson CQ and WCR, it satisfies WSONC. The proof then becomes more complicated here because we are not assuming WCR in (a), which was crucial in the proof given in~\cite{EL20}.
\end{itemize}

%% Abstract of each chapter
This paper is organized as follows. In Section~\ref{sec:preliminaries}, we give some notations and some basic results which will be useful for our work. In Section~\ref{sec:optimality}, we review the existing second-order analyses for SOCP. In Section~\ref{sec:akkt2}, we propose a new second-order sequential optimality condition for SOCP and discuss its properties. In Section~\ref{sec:algorithms}, we propose algorithms that generate points that fulfill the proposed condition. In Section~\ref{sec:conclusion} we give some conclusions.

\section{Preliminaries}
\label{sec:preliminaries}
%% About the section
In this section, we introduce some notations and results that we use throughout the paper. Let us first give the notations. The transpose of a matrix $A$ is denoted by~$A^\T$ \red{ and the identity matrix with dimension $\ell$ is
denoted by $I_\ell$.} We write the set of nonnegative real numbers by~$\R_+$. \red{For any vector $w \in \R^\ell$, we consider the block notation $w := (w_0, \overline{w}) \in \R \times \R^{\ell-1}$, where $w_0$ may also be denoted as $[w]_0$ for clarity.} The inner product of vectors $x, y \in \R^n$ is denoted by $\inner{x}{y} := \sum_{i=0}^{n-1}x_{i}y_{i}$, where~\red{$x := (x_0,\dots,x_{n-1})^\T$ and $y := (y_0,\dots,y_{n-1})^\T$}, respectively. The Euclidean norm of $x \in \R^n$ is written by $\norm{x} := \inner{x}{x}^{\frac{1}{2}}$. For a second-order cone~$\mathcal{K}_i$, we write $\mathrm{int}(\mathcal{K}_i) \red{\,:= \{ (w_0, \overline{w}) \mid w_0 > \norm{\overline{w}}\}}$ and $\mathrm{bd}^{+}(\mathcal{K}_i) \red{\,:= \{ (w_0, \overline{w}) \mid w_0 = \norm{\overline{w}} \ne 0}\}$ as the interior and the boundary excluding the origin of the cone respectively. Using these notations, we define the following sets of indices of \eqref{eq:socp}, which forms a partition of $\{1, \dots, r\}$:
\begin{equation*}
  \begin{array}{rcl}
    I_0(x) & := & \{i \in \{1, \dots, r\} \mid g_i(x) = 0\}, \\
    I_B(x) & := & \{i \in \{1, \dots, r\} \mid g_i(x) \in \mathrm{bd}^{+}(\mathcal{K}_{i})\}, \\
    I_I(x) & := & \{i \in \{1, \dots, r\} \mid g_i(x) \in \mathrm{int}(\mathcal{K}_{i})\}.
  \end{array}
\end{equation*}
Moreover, if a real symmetric matrix $X$ is positive definite (positive semidefinite), it is denoted by $X \succ O$ ($X \succeq O$). The gradient and the Hessian of a function $\zeta\colon \R^n \to \R$ at $x \in \R^n$ are written as $\grad \zeta(x)$ and $\grad^2 \zeta(x)$ respectively. If $\tilde{\zeta}\colon \R^n \times \R^m \to \R$, then its gradient at $(x, y) \in \R^n \times \R^m$ with respect to $x$ is denoted by $\grad_{x} \tilde{\zeta}(x, y)$. Let $\tilde{\varphi} := (\tilde{\varphi}_1, \dots, \tilde{\varphi}_m)^{\T}$ with $\tilde{\varphi}_i: \R^n \to \R$ for all $i = 1, \dots, m$. Then, the Jacobian matrix of $\tilde{\varphi}$ at $x$ is written by $D\tilde{\varphi}(x) = (\grad \tilde{\varphi}_1(x), \dots, \grad \tilde{\varphi}_m(x))^{\T}$. We denote the projection of $x$ onto a second-order cone~$\mathcal{K}_i$ by $\Pi_{\mathcal{K}_i}(x)$, which fulfills
\begin{equation*}
  \norm{x - \Pi_{\mathcal{K}_i}(x)} = \mathrm{min}\big\{\norm{x-z} \mid z \in \mathcal{K}_i\big\}.
\end{equation*}
Note that $\Pi_{\mathcal{K}_i}(x)$ is well-defined since $\mathcal{K}_i$ is a closed convex set.

%% Nonsmooth analysis
Next, we introduce some results of nonsmooth analysis~\red{\cite{Cla83,FP03}}. Let $W \colon \R^n \to \R^s$ be a locally Lipschitz function. The B-subdifferential of $W$ at $x \in \R^n$ is defined by
\begin{equation*}
  \partial_{B}W(x) := \{
    V \in \R^{s \times n} \mid
    \exists \{x^k\} \subset \mathcal{D}(W),
    x^k \to x,
    DW(x^k) \to V
  \},
\end{equation*}
where $\mathcal{D}(W)$ is the \red{set where $W$ is differentiable}. Also, we define the Clarke subdifferential of $W$ at $x$, which is the convex hull of $\partial_{B}W(x)$, by $\partial W(x)$. \red{If $W$ is differentiable and $\grad W$ is locally Lipschitz,} the generalized Hessian of $W$ at $x$ is defined by $\partial^2 W(x) := \partial\grad W(x)$. We also give the following result related to generalized Hessians.
\begin{theorem}
  \label{theorem:generalized_hessian}
  \cite[Theorem 3.1]{HI84}
  Let $x^*$ be a local minimizer of a differentiable function $W\colon \R^n \to \R$ and $\grad W$ be locally Lipschitz. Then, $\grad W(x^*) = 0$ holds and \red{for each $d \in \R^n$,} there exists $M \in \partial^2 W(x^*)$ such that $d^{\T}Md \geq 0$.
  %\begin{equation*}
  %  \underset{d \in \R^n}{\mathrm{lim\ sup\ }} d^{\T}\partial^2 W(x^*)d \geq 0.
  %\end{equation*}
\end{theorem}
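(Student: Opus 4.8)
The plan is to reduce the multivariate second-order statement to a one-dimensional one along the fixed direction $d$, and then pass to the limit using the regularity of the generalized Hessian map. \textbf{First}, the stationarity $\grad W(x^*) = 0$ is just Fermat's rule: since $W$ is differentiable and $x^*$ is a local minimizer, all of its partial derivatives at $x^*$ vanish.

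For the second-order part, fix $d \in \R^n$ (the case $d = 0$ is trivial, since $0^\T M 0 = 0$ for every $M \in \partial^2 W(x^*)$). Define $\varphi(t) := W(x^* + t d)$. Because $\grad W$ is locally Lipschitz, $\varphi$ is of class $C^{1,1}$ with $\varphi'(t) = \inner{\grad W(x^* + td)}{d}$, and $\varphi'(0) = \inner{\grad W(x^*)}{d} = 0$; moreover $t = 0$ is a local minimizer of $\varphi$. I would then invoke (or, if needed, reprove) the second-order mean value expansion for $C^{1,1}$ data: for each sufficiently small $t > 0$ there exist $\sigma_t \in (0, t)$ and a generalized second derivative $\nu_t \in \partial^2 \varphi(\sigma_t)$ such that
\[
  \varphi(t) = \varphi(0) + t\,\varphi'(0) + \tfrac{1}{2} t^2 \nu_t .
\]
Combining this with $\varphi'(0) = 0$ and $\varphi(t) \ge \varphi(0)$ yields $\nu_t \ge 0$ for all small $t > 0$.

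\textbf{Next}, I would translate this one-dimensional second derivative back into a matrix in $\partial^2 W$. Writing $\varphi'(s) = \inner{\grad W(x^* + sd)}{d}$ as the composition of the affine map $s \mapsto x^* + sd$, the locally Lipschitz map $\grad W$, and the linear functional $\inner{\cdot}{d}$, the Clarke chain rule gives the inclusion $\partial^2 \varphi(\sigma_t) \subseteq \{\, d^\T M d \mid M \in \partial^2 W(x^* + \sigma_t d) \,\}$. Hence there is some $M_t \in \partial^2 W(x^* + \sigma_t d)$ with $\nu_t = d^\T M_t d \ge 0$.

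\textbf{Finally}, I let $t \downarrow 0$. Since $\sigma_t \in (0,t)$, the base points $x^* + \sigma_t d$ converge to $x^*$. As the Clarke generalized Jacobian of the locally Lipschitz map $\grad W$, the generalized Hessian $\partial^2 W = \partial \grad W$ is locally bounded, has a closed graph (outer semicontinuity), and takes convex compact values. Local boundedness lets me extract a subsequence $M_{t_k} \to M$, and the closed-graph property forces $M \in \partial^2 W(x^*)$; passing to the limit in $\nu_{t_k} = d^\T M_{t_k} d \ge 0$ gives $d^\T M d \ge 0$, as claimed. \textbf{The main obstacle} is the second step: making the $C^{1,1}$ second-order mean value expansion precise and guaranteeing that the intermediate second derivative genuinely belongs to $\partial^2 \varphi(\sigma_t)$. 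An alternative would be to use the a.e.\ second differentiability of $\varphi$ and write $\varphi(t) - \varphi(0) = \int_0^t (t-u)\,\varphi''(u)\,\mathrm{d}u$, but then relating $\varphi''$ to $\partial^2 W$ is delicate, because the segment $\{x^* + sd\}$ may miss the full-measure set on which $\grad W$ is differentiable; this is precisely why the chain-rule route above, which builds $M$ from nearby off-segment differentiability points, is cleaner.
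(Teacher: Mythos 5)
The paper offers no internal proof of this statement: it is imported directly as Theorem 3.1 of the cited reference \cite{HI84}, and your argument is correct and essentially reconstructs that reference's own proof---Fermat's rule, a second-order mean value expansion for $C^{1,1}$ functions (which is Theorem 2.3 of the very same reference, so invoking it is a legitimate citation rather than a gap), the chain-rule/scalarization inclusion $\partial^2\varphi(\sigma_t) \subseteq \{d^{\T}Md \mid M \in \partial^2 W(x^*+\sigma_t d)\}$, and the limit passage via local boundedness and graph-closedness of the Clarke generalized Jacobian. The only cosmetic difference is that \cite{HI84} applies the mean value expansion in $\R^n$ directly along the segment, producing the matrices $M_t \in \partial^2 W(x^*+\sigma_t d)$ at once, whereas you take a one-dimensional detour and recover them through the chain rule; both routes are valid.
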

We use this result to get the second-order information of the function associated with the proposed condition. To make a more detailed analysis, we introduce the following theorem, which derives from~\cite[Theorem 5.1]{PA06}.
\begin{theorem}
  \label{theorem:convolution}
  Let $Y\colon \R^n \to \R^s$ and $W\colon \R^s \to \R^s$ be functions such that $Y$ is continuously differentiable at fixed $x \in \R^n$ and $W$ is Lipschitz continuous at a neighborhood of $Y(x)$. Then, we obtain
  \begin{equation*}
    \partial(W \circ Y)(x) \subseteq \partial W(Y(x)) \circ DY(x),
  \end{equation*}
  where $\partial W(Y(x)) \circ DY(x) := \{V \circ DY(x) \mid V \in \partial W(Y(x)) \}$.
\end{theorem}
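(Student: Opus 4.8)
The plan is to prove the inclusion by first reducing the statement about the Clarke subdifferential to one about the B-subdifferential, and then to exploit the fact that $Y$ is not merely differentiable but strictly differentiable at $x$ (which follows from continuous differentiability) to push a chain-rule estimate through the limit defining $\partial_{B}$. First I would check that the right-hand side $\partial W(Y(x)) \circ DY(x)$ is nonempty, compact and convex: since $W$ is locally Lipschitz, $\partial W(Y(x))$ is a nonempty compact convex set of matrices, and the map $V \mapsto V \circ DY(x)$ is linear, hence continuous, so its image inherits these properties. Because $\partial(W \circ Y)(x)$ is by definition the convex hull of $\partial_{B}(W \circ Y)(x)$ while the right-hand side is already convex, it suffices to establish $\partial_{B}(W \circ Y)(x) \subseteq \partial W(Y(x)) \circ DY(x)$; taking convex hulls on the left then keeps us inside the convex right-hand side.

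Next I would fix $V \in \partial_{B}(W \circ Y)(x)$ and choose $x^k \to x$ with $W \circ Y$ differentiable at each $x^k$ and $D(W \circ Y)(x^k) \to V$. For $k$ large, $Y$ is differentiable at $x^k$ with $DY(x^k) \to DY(x)$. The core step is to produce matrices $M_k \in \partial W(Y(x^k))$ such that $D(W \circ Y)(x^k) - M_k\, DY(x^k) \to 0$. To obtain these, I would scalarize: for every $\lambda \in \R^s$, strict differentiability of $Y$ gives the uniform expansion $Y(y + td) - Y(y) = t\, DY(y)d + o(t)$ near $x$, which together with the Lipschitz property of $W$ bounds the Clarke directional derivative of $\lambda^{\T}(W \circ Y)$ at $x^k$ by that of $\lambda^{\T}W$ at $Y(x^k)$ evaluated in the direction $DY(x^k)d$. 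The support-function characterization of the compact convex set $\partial W(Y(x^k))$ then lets me select the desired $M_k$. Since $\partial W$ is upper semicontinuous and compact-valued, the bounded sequence $\{M_k\}$ has a subsequence converging to some $M \in \partial W(Y(x))$, and passing to the limit yields $V = M\, DY(x) \in \partial W(Y(x)) \circ DY(x)$.

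I expect the main obstacle to be precisely this middle step: differentiability of the composition $W \circ Y$ at $x^k$ does \emph{not} entail differentiability of $W$ at $Y(x^k)$, so one cannot simply invoke the elementary chain rule pointwise. The device that resolves this is the uniform (strict) first-order expansion of $Y$ afforded by $C^1$-smoothness, which transfers the estimate to the level of Clarke directional derivatives, combined with the compactness and upper semicontinuity of the generalized Jacobian of $W$ needed to extract the limiting matrix $M$. The vector-valued nature of $W$ — so that $\partial W$ consists of matrices rather than vectors — forces this bookkeeping to be carried out componentwise via the scalarizations $\lambda^{\T}W$, or equivalently through the matrix support function, and this is the only genuine technical wrinkle; everything else follows from the linearity of $V \mapsto V \circ DY(x)$ and the convexity reduction of the first step.
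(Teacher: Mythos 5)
The paper never actually proves Theorem~\ref{theorem:convolution}: it imports the statement from \cite[Theorem 5.1]{PA06}, so your attempt has to stand on its own rather than be measured against an internal argument. Its outer structure is sound: $W \circ Y$ is locally Lipschitz near $x$; the set $\partial W(Y(x)) \circ DY(x)$ is the image of the nonempty compact convex set $\partial W(Y(x))$ under the linear map $V \mapsto V\,DY(x)$, hence nonempty, compact and convex; so it does suffice to prove $\partial_B(W\circ Y)(x) \subseteq \partial W(Y(x)) \circ DY(x)$, and your final passage to the limit via boundedness and upper semicontinuity of the generalized Jacobian is standard.

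The genuine gap is the core step, the selection of $M_k \in \partial W(Y(x^k))$. Your scalarized estimate gives, for all $\lambda \in \R^s$ and $d \in \R^n$,
\[
\lambda^\T D(W\circ Y)(x^k)\, d \;\le\; (\lambda^\T W)^{\circ}\big(Y(x^k); DY(x^k)d\big)
\;=\; \max\big\{\lambda^\T M\, DY(x^k)\, d \;\big|\; M \in \partial W(Y(x^k))\big\},
\]
where $(\cdot)^{\circ}$ is the Clarke directional derivative and the last equality is the (true, but itself nontrivial) scalarization identity, which I grant you. These are support inequalities for the compact convex set of \emph{matrices} $\mathcal{C}_k := \partial W(Y(x^k)) \circ DY(x^k)$ along rank-one directions $\lambda d^\T$ only, and rank-one support dominance does not imply membership, nor even that $\mathrm{dist}\big(D(W\circ Y)(x^k), \mathcal{C}_k\big)$ is small: separating a matrix from a convex set of matrices may require a normal of rank two or more, and the rank-one inequalities only place a point in the (possibly strictly larger) ``plenary hull'' of $\mathcal{C}_k$. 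Concretely, take $\mathcal{C}$ to be the nuclear-norm unit ball in $\R^{2\times 2}$ and $\zeta := I_2$; then $\langle \zeta, \lambda d^\T\rangle = \lambda^\T d \le \norm{\lambda}\,\norm{d} = \max_{M \in \mathcal{C}} \lambda^\T M d$ for all $\lambda, d$, yet $\zeta \notin \mathcal{C}$ because $\norm{I_2}_{*} = 2$. This is exactly what makes the generalized-Jacobian chain rule harder than Clarke's scalar chain rule: if $W$ were real-valued, subdifferentials would be sets of vectors, support inequalities over all directions would characterize membership by separation, and your proof would be complete. For $\R^s$-valued $W$, scalarization (equivalently, arguing row-by-row) can only yield $\partial(W\circ Y)(x) \subseteq \big(\partial W_1(Y(x)) \times \cdots \times \partial W_s(Y(x))\big) \circ DY(x)$, and this Cartesian product of row subdifferentials strictly contains $\partial W(Y(x))$ in general (e.g.\ $W = (f,f)^\T$ with $f$ nonsmooth), so the stated inclusion does not follow; nothing in your argument exploits special structure of $\partial W$ that would restore the missing separation. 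Closing the gap requires a different mechanism for the matrix-valued case --- for instance a mollification/gradient-consistency argument, or a Rademacher--Fubini argument on the differentiability sets, as in the reference the paper cites --- not support functions alone. (A minor secondary point: your uniform first-order expansion at $x^k$ needs $Y$ to be $C^1$ on a neighborhood of $x$, not merely continuously differentiable at the single point $x$; under the paper's $C^2$ setting this reading is harmless.)
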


We now state some important results related to second-order cones.
\begin{lemma}
  \label{lemma:projection}
  \cite[Proposition 3.3]{FU02}
  For any $z = (z_0, \overline{z}) \in \R \times \R^{n-1}$, we have
  \begin{equation*}
    \Pi_{\mathcal{K}}(z) = \mathrm{max}\{0, \eta_1\}u^1 + \mathrm{max}\{0, \eta_2\}u^2,
  \end{equation*}
  where $\eta_1 := z_0 - \norm{\overline{z}}$ and $\eta_2 := z_0 + \norm{\overline{z}}$ \red{are the eigenvalues of $z$}, and
  \begin{equation*}
    u^1 :=
    \left\{
      \begin{array}{ll}
        \frac{1}{2}
        \left(
          \begin{array}{c}
            1 \\
            -\frac{\overline{z}}{\norm{\overline{z}}}
          \end{array}
        \right) &
        \mathrm{if\ } \overline{z} \neq 0, \\
        \frac{1}{2}
        \left(
          \begin{array}{c}
            1 \\
            -\overline{w}
          \end{array}
        \right) &
        \mathrm{otherwise},
      \end{array}
    \right.
    u^2 :=
    \left\{
      \begin{array}{ll}
        \frac{1}{2}
        \left(
          \begin{array}{c}
            1 \\
            \frac{\overline{z}}{\norm{\overline{z}}}
          \end{array}
        \right) &
        \mathrm{if\ } \overline{z} \neq 0, \\
        \frac{1}{2}
        \left(
          \begin{array}{c}
            1 \\
            \overline{w}
          \end{array}
        \right) &
        \mathrm{otherwise},
      \end{array}
    \right.
  \end{equation*}
  \red{are the eigenvectors of $z$}, where $\overline{w} \in \R^{n-1}$ is any vector which satisfies $\norm{\overline{w}} = 1$. 
\end{lemma}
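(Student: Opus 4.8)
The plan is to exploit the spectral (Jordan-algebraic) structure of $z$ together with the variational characterization of projection onto a closed convex cone. First I would verify that $\{u^1, u^2\}$ is a Jordan frame associated with $z$, i.e. that $z = \eta_1 u^1 + \eta_2 u^2$, $\inner{u^1}{u^2} = 0$, and $\norm{u^1}^2 = \norm{u^2}^2 = \tfrac{1}{2}$. These are direct computations: when $\overline{z} \neq 0$, the first coordinate of $\eta_1 u^1 + \eta_2 u^2$ equals $(\eta_1 + \eta_2)/2 = z_0$ and the remaining block equals $\tfrac{1}{2}(\eta_2 - \eta_1)\,\overline{z}/\norm{\overline{z}} = \overline{z}$, while $\inner{u^1}{u^2} = \tfrac{1}{4}(1 - \norm{\overline{z}}^2/\norm{\overline{z}}^2) = 0$. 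When $\overline{z} = 0$ we have $\eta_1 = \eta_2 = z_0$, and the same identities hold for every unit vector $\overline{w}$; this in particular shows that the right-hand side of the claimed formula does not depend on the arbitrary choice of $\overline{w}$.

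Next I would recall the characterization of the metric projection onto a closed convex cone: $p = \Pi_{\mathcal{K}}(z)$ if and only if $p \in \mathcal{K}$, the residual $z - p$ lies in the polar cone, and $\inner{p}{z - p} = 0$. Since the second-order cone is self-dual, its polar cone is $-\mathcal{K}$, so it suffices to exhibit the (unique) Moreau decomposition $z = p + q$ with $p \in \mathcal{K}$, $q \in -\mathcal{K}$, and $\inner{p}{q} = 0$. I would take as candidate
\[
  p := \max\{0,\eta_1\}u^1 + \max\{0,\eta_2\}u^2, \qquad
  q := z - p = \min\{0,\eta_1\}u^1 + \min\{0,\eta_2\}u^2,
\]
where the second equality uses the spectral decomposition established above.

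It then remains to check the three conditions. Each eigenvector satisfies $[u^j]_0 = \tfrac{1}{2} = \norm{\overline{u^j}}$, so $u^1, u^2 \in \mathcal{K}$; as $\mathcal{K}$ is a convex cone it is closed under nonnegative combinations, whence $p \in \mathcal{K}$. Likewise $-q = \max\{0,-\eta_1\}u^1 + \max\{0,-\eta_2\}u^2 \in \mathcal{K}$, giving $q \in -\mathcal{K}$. Finally, using $\inner{u^1}{u^2} = 0$ and $\norm{u^j}^2 = \tfrac{1}{2}$, the cross terms vanish and I obtain $\inner{p}{q} = \tfrac{1}{2}\max\{0,\eta_1\}\min\{0,\eta_1\} + \tfrac{1}{2}\max\{0,\eta_2\}\min\{0,\eta_2\} = 0$, since for every real $\eta$ one of $\max\{0,\eta\}$, $\min\{0,\eta\}$ is zero. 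By the uniqueness of the Moreau decomposition, $p = \Pi_{\mathcal{K}}(z)$, which is exactly the claimed formula.

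The computations are all routine; the only points requiring care are establishing self-duality of the second-order cone (a standard consequence of the Cauchy--Schwarz inequality) and the degenerate case $\overline{z} = 0$, where one must confirm that the formula is well-defined despite the freedom in choosing $\overline{w}$. I expect the verification of the orthogonality relation $\inner{p}{q} = 0$ to be the conceptual crux, since it is precisely what forces the candidate to be the projection rather than merely a nearby feasible point.
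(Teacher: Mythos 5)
Your proof is correct, but note that the paper itself gives no proof of this lemma: it is stated as a known result, cited from \cite[Proposition 3.3]{FU02}, so there is nothing internal to compare against. Your argument --- verifying the spectral decomposition $z = \eta_1 u^1 + \eta_2 u^2$ with an orthogonal frame, then invoking uniqueness of the Moreau decomposition $z = p + q$, $p \in \mathcal{K}$, $q \in -\mathcal{K}$, $\inner{p}{q} = 0$, together with self-duality of the second-order cone --- is the standard route and is essentially how the cited reference establishes the result; your treatment of the degenerate case $\overline{z} = 0$, where the arbitrariness of $\overline{w}$ cancels because $u^1 + u^2 = (1,0,\dots,0)^{\T}$, correctly handles the one point requiring care.
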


The lemma above shows that the projection onto a second-order cone can be written simply by taking the maximum between zero and the eigenvalues $\eta_1$ and $\eta_2$ associated with the point~$z$. Moreover, the eigenvectors $u^1$ and $u^2$ of $z$ remain untouched. \red{Now, let us define the matrix} $M_{i} \colon \R \times \R^{m_i-1} \to \R^{m_i \times m_i}$ as
  \begin{equation}
    \label{eq:m_i}
    M_{i}(\xi, w) := \frac{1}{2}\left(\begin{array}{lc} 1 & w^{\T} \\ w & (1+\xi)I_{m_i-1} - \xi ww^{\T} \end{array}\right).
  \end{equation}
The next result characterizes the elements of the B-subdifferential $\partial_{B}\Pi_{\mathcal{K}_i}(z)$ depending on~$z$. 

\begin{lemma}
  \label{lemma:bsub}
  \cite[Proposition 4.8]{HA05}, \cite[Lemma 14]{SU03}
  \red{Let $M_{i}$ be defined in (\ref{eq:m_i}).} The B-subdifferential $\partial_{B}\Pi_{\mathcal{K}_i}(z)$ at $z \in \R^{m_i}$ is given as follows:
  \begin{enumerate}[(a)]
    \item If $z \in int(-\mathcal{K}_{i})$, then $\partial_{B}\Pi_{\mathcal{K}_i}(z) = \{0\}$;
    \item If $z \in int(\mathcal{K}_{i})$, then $\partial_{B}\Pi_{\mathcal{K}_i}(z) = \{I_{m_i}\}$;
    \item If $z \notin \mathcal{K}_{i} \cup -\mathcal{K}_{i}$, then $\partial_{B}\Pi_{\mathcal{K}_i}(z) = \left\{M_{i}\left(\frac{z_0}{\norm{\overline{z}}}, \frac{\overline{z}}{\norm{\overline{z}}}\right)\right\}$;
    \item If $z \in \mathrm{bd}^{+}(\mathcal{K}_{i})$, then $\partial_{B}\Pi_{\mathcal{K}_i}(z) = \left\{I_{m_i}, M_{i}\left(1, \frac{\overline{z}}{\norm{\overline{z}}}\right)\right\}$;
    \item If $z \in \mathrm{bd}^{+}(-\mathcal{K}_{i})$, then $\partial_{B}\Pi_{\mathcal{K}_i}(z) = \left\{0, M_{i}\left(-1, \frac{\overline{z}}{\norm{\overline{z}}}\right)\right\}$;
    \item If $z = 0$, then $\partial_{B}\Pi_{\mathcal{K}_i}(z) = \{0, I_{m_i}\} \cup \{M_{i}(\xi, w) \mid |\xi| \leq 1, \norm{w} = 1\}$;
  \end{enumerate}
\end{lemma}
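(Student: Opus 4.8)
Although the statement is recorded in the cited references, a self-contained proof can be built directly from the explicit projection formula in Lemma~\ref{lemma:projection}. The plan is to (i) partition $\R^{m_i}$ according to the signs of the eigenvalues $\eta_1 = z_0 - \norm{\overline z}$ and $\eta_2 = z_0 + \norm{\overline z}$, (ii) compute the classical Jacobian $D\Pi_{\mathcal{K}_i}$ on each open region where $\Pi_{\mathcal{K}_i}$ is smooth, and (iii) read off $\partial_{B}\Pi_{\mathcal{K}_i}(z)$ as the set of all limits of these Jacobians along sequences $z^k \to z$ lying in the differentiability set $\mathcal{D}(\Pi_{\mathcal{K}_i})$.

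First I would identify the three open regions on which $\Pi_{\mathcal{K}_i}$ is continuously differentiable. On $\mathrm{int}(\mathcal{K}_i)$ both eigenvalues are positive, so by Lemma~\ref{lemma:projection} the projection is the identity and $D\Pi_{\mathcal{K}_i} = I_{m_i}$; on $\mathrm{int}(-\mathcal{K}_i)$ both are negative, so $\Pi_{\mathcal{K}_i} \equiv 0$ and $D\Pi_{\mathcal{K}_i} = 0$; on the ``middle'' region $\{z : |z_0| < \norm{\overline z}\}$ we have $\eta_1 < 0 < \eta_2$ and $\overline z \ne 0$, so $\Pi_{\mathcal{K}_i}(z) = \tfrac{\eta_2}{2}u^2 = \tfrac{z_0 + \norm{\overline z}}{2}(1, \overline z/\norm{\overline z})^{\T}$. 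A direct differentiation of this expression, using $\grad \norm{\overline z} = \overline z / \norm{\overline z}$, yields exactly the matrix $M_i(z_0/\norm{\overline z}, \overline z/\norm{\overline z})$ of~(\ref{eq:m_i}). Since these three regions are open and exhaust the complement of $\mathrm{bd}^{+}(\mathcal{K}_i) \cup \mathrm{bd}^{+}(-\mathcal{K}_i) \cup \{0\}$, the set $\mathcal{D}(\Pi_{\mathcal{K}_i})$ is precisely their union, and items (a), (b), (c) follow immediately, because each such $z$ has a whole neighborhood inside a single smooth region.

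Next I would treat the boundary cases by taking limits of the above Jacobians. For $z \in \mathrm{bd}^{+}(\mathcal{K}_i)$ (so $z_0 = \norm{\overline z} > 0$), the only smooth regions accumulating at $z$ are $\mathrm{int}(\mathcal{K}_i)$, which contributes $I_{m_i}$, and the middle region, along which $z_0^k/\norm{\overline{z}^k} \to 1$ and $\overline{z}^k/\norm{\overline{z}^k} \to \overline z/\norm{\overline z}$, so the continuity of $M_i$ gives the limit $M_i(1, \overline z/\norm{\overline z})$; this proves (d). Case (e) is symmetric, with $\mathrm{int}(-\mathcal{K}_i)$ contributing $0$ and the middle region contributing $M_i(-1, \overline z/\norm{\overline z})$.

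The delicate case is (f), $z = 0$, where all three smooth regions accumulate. The interior cones again contribute $I_{m_i}$ and $0$. For the middle region the key point is to determine the set of accumulation points of the pairs $(\xi^k, w^k) := (z_0^k/\norm{\overline{z}^k}, \overline{z}^k/\norm{\overline{z}^k})$ as $z^k \to 0$ with $|z_0^k| < \norm{\overline{z}^k}$, and to show it equals $\{(\xi, w) : |\xi| \le 1, \norm{w} = 1\}$. The inclusion $\subseteq$ is clear, since $|\xi^k| < 1$ and $\norm{w^k} = 1$ force any limit to satisfy $|\xi| \le 1$ and $\norm{w} = 1$; for $\supseteq$, given a target $(\xi, w)$ I would exhibit an admissible sequence, e.g. $z^k = t_k(\xi, w)$ with $t_k \downarrow 0$ when $|\xi| < 1$, and a perturbation $\xi_k \to \xi$ with $|\xi_k| < 1$ when $|\xi| = 1$, converging to it. The continuity of $M_i$ then transfers this to the family $\{M_i(\xi, w) : |\xi| \le 1, \norm{w} = 1\}$, completing (f). I expect this surjectivity argument at the origin to be the main obstacle, since it is the only point where a continuum of subdifferential elements appears and one must verify both that every claimed matrix is attained and that no extraneous limits arise.
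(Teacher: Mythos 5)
The paper itself offers no proof of this lemma (it is quoted from \cite{HA05} and \cite{SU03}), so your attempt can only be measured against the standard argument in those references, which is essentially what you reconstruct: compute the classical Jacobian of $\Pi_{\mathcal{K}_i}$ on the three open regions where it is smooth, then read off the B-subdifferential as limits of those Jacobians. Your computations are sound: differentiating $\frac{z_0+\norm{\overline{z}}}{2}\,(1,\overline{z}/\norm{\overline{z}})^{\T}$ on the region $\{|z_0|<\norm{\overline{z}}\}$ does give exactly $M_i(z_0/\norm{\overline{z}},\overline{z}/\norm{\overline{z}})$ as in (\ref{eq:m_i}) (note the small typo: the intermediate expression should read $\eta_2 u^2$, not $\tfrac{\eta_2}{2}u^2$, though the formula you actually differentiate is correct), the limit analysis in cases (d) and (e) is right, and your two-step attainability argument at the origin (constant rays for $|\xi|<1$, perturbed rays for $|\xi|=1$) correctly shows that every $M_i(\xi,w)$ with $|\xi|\le 1$, $\norm{w}=1$ is attained and that no other limits can arise from the middle region.

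There is, however, one genuine gap: the sentence claiming that, because the three smooth regions are open and exhaust the complement of $\mathrm{bd}^{+}(\mathcal{K}_i)\cup\mathrm{bd}^{+}(-\mathcal{K}_i)\cup\{0\}$, the differentiability set $\mathcal{D}(\Pi_{\mathcal{K}_i})$ \emph{is precisely} their union. That is a non sequitur: openness of the smooth regions says nothing about whether $\Pi_{\mathcal{K}_i}$ might additionally be differentiable at some boundary points, and the B-subdifferential is defined via limits over \emph{all} sequences in $\mathcal{D}(\Pi_{\mathcal{K}_i})$. If boundary points belonged to $\mathcal{D}(\Pi_{\mathcal{K}_i})$, extra matrices could appear, and the inclusions $\partial_B\Pi_{\mathcal{K}_i}(z)\subseteq\{\cdots\}$ in items (d)--(f) would not follow; your argument as written only proves the reverse inclusions there. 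The fix is short. Suppose $\Pi_{\mathcal{K}_i}$ were differentiable at $z\in\mathrm{bd}^{+}(\mathcal{K}_i)$ with Jacobian $J$. For every $d$ in the (nonempty, open) cone of directions pointing into $\mathrm{int}(\mathcal{K}_i)$ we have $\Pi_{\mathcal{K}_i}(z+td)=z+td$ for small $t>0$, so $Jd=d$ on an open set, which forces $J=I_{m_i}$; for $d$ pointing into the middle region, smoothness of $\Pi_{\mathcal{K}_i}$ up to the boundary of that region gives the one-sided derivative $M_i(1,\overline{z}/\norm{\overline{z}})\,d$, forcing $J=M_i(1,\overline{z}/\norm{\overline{z}})\neq I_{m_i}$ --- a contradiction. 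The same two-cone argument rules out differentiability on $\mathrm{bd}^{+}(-\mathcal{K}_i)$ (using $0$ and $M_i(-1,\cdot)$) and at the origin (using $I_{m_i}$ and $0$). With this non-differentiability argument added, your proof is complete and self-contained.
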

\begin{lemma}
  \label{lemma:m_eigenvalue}
  \cite[Lemma 2.8]{KA09}
  Let $w \in \R^{m_{i}-1}$ \red{and $\xi \in \R$ such that} $\norm{w} = 1$ and \red{$|\xi| \le 1$.}
  Then, $M_{i}(\xi, w)$ defined in (\ref{eq:m_i}) has the two single eigenvalues $0$ and $1$ as well as $\frac{1}{2}(1+\xi)$ with multiplicity $m_i-2$.
\end{lemma}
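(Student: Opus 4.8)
The plan is to diagonalize $M_i(\xi,w)$ by splitting $\R^{m_i}$ into invariant subspaces dictated by the block structure in \eqref{eq:m_i} together with the normalization $\norm{w}=1$. Write $m:=m_i$ and identify $\R^m=\R\times\R^{m-1}$. Since $\norm{w}=1$, I would decompose $\R^{m-1}=\R w\oplus w^{\perp}$, where $w^{\perp}:=\{v\in\R^{m-1}\mid w^{\T}v=0\}$ has dimension $m-2$. This induces the orthogonal splitting $\R^m=V_1\oplus V_2$ with $V_1:=\mathrm{span}\{(1,0),(0,w)\}$ two-dimensional and $V_2:=\{(0,v)\mid v\in w^{\perp}\}$ of dimension $m-2$; orthogonality follows from $\norm{w}=1$ and $w^{\T}v=0$.

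The first step is to handle $V_2$. For $(0,v)$ with $v\in w^{\perp}$, I would compute
\[
  M_i(\xi,w)\begin{pmatrix}0\\ v\end{pmatrix}
  =\frac12\begin{pmatrix}w^{\T}v\\ (1+\xi)v-\xi w(w^{\T}v)\end{pmatrix}
  =\begin{pmatrix}0\\ \tfrac12(1+\xi)v\end{pmatrix},
\]
which shows that $V_2$ is invariant and that $M_i(\xi,w)$ acts on it as $\tfrac12(1+\xi)$ times the identity, contributing the eigenvalue $\tfrac12(1+\xi)$ with multiplicity $m-2$.

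The second step is $V_1$. Evaluating on a generic $(a,bw)$ and using $w^{\T}w=\norm{w}^2=1$ gives
\[
  M_i(\xi,w)\begin{pmatrix}a\\ bw\end{pmatrix}
  =\tfrac12(a+b)\begin{pmatrix}1\\ w\end{pmatrix},
\]
so in the coordinates $(a,b)$ the restriction of $M_i(\xi,w)$ to $V_1$ is the $2\times2$ matrix $\tfrac12\bigl(\begin{smallmatrix}1&1\\1&1\end{smallmatrix}\bigr)$, of trace $1$ and determinant $0$, hence with the simple eigenvalues $1$ and $0$ (eigenvectors $(1,w)$ and $(1,-w)$). Collecting the contributions from $V_1$ and $V_2$ accounts for all $m$ eigenvalues and yields the claimed spectrum.

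I do not expect a genuine obstacle here, as the argument is elementary linear algebra; the only points requiring care are verifying that $V_1$ and $V_2$ are truly invariant (which rests on $\norm{w}=1$) and tracking the block structure correctly. The hypothesis $|\xi|\le1$ is not actually used in the eigenvalue count—it merely ensures $\tfrac12(1+\xi)\in[0,1]$, so that all eigenvalues lie in $[0,1]$; at the endpoints $\xi=\pm1$ the value $\tfrac12(1+\xi)$ coincides with $1$ or $0$, but the algebraic multiplicities stated remain correct when read as a list of the $m$ eigenvalues.
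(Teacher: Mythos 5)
Your proof is correct. There is in fact no in-paper argument to compare against: the paper states this lemma purely by citation to \cite[Lemma 2.8]{KA09} and never proves it, so a self-contained verification like yours is exactly what one would want. Your route is the natural one dictated by the block structure of $M_i(\xi,w)$ in (\ref{eq:m_i}): the orthogonal splitting $\R^{m_i}=V_1\oplus V_2$ with $V_1=\mathrm{span}\{(1,0),(0,w)\}$ and $V_2=\{(0,v)\mid w^{\T}v=0\}$, the computation showing that $M_i(\xi,w)$ acts as $\tfrac12(1+\xi)$ times the identity on $V_2$ (this is where $\norm{w}=1$ is not even needed, only $w^{\T}v=0$), and the identification of the restriction to $V_1$ with $\tfrac12\bigl(\begin{smallmatrix}1&1\\1&1\end{smallmatrix}\bigr)$ (this is where $\norm{w}=1$ does enter), whose simple eigenvalues $1$ and $0$ have eigenvectors $(1,w)$ and $(1,-w)$; these match, up to scaling, the eigenvectors $u^2,u^1$ appearing in Lemma~\ref{lemma:projection}, which is a good consistency check. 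Your two closing observations are also accurate: the hypothesis $|\xi|\le1$ plays no role in the eigenvalue count and only serves to place $\tfrac12(1+\xi)$ in $[0,1]$, which is precisely the remark the paper makes immediately after the lemma; and at the endpoints $\xi=\pm1$ the phrase ``two single eigenvalues'' degenerates because $\tfrac12(1+\xi)$ merges with $1$ or $0$, though the multiset of $m_i$ eigenvalues you produce remains correct.
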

We finally, observe from Lemma~\ref{lemma:m_eigenvalue} that the eigenvalues of the matrix $M_{i}(\xi, w)$ are always in the interval $[0, 1]$ when $\norm{w} = 1$ and \red{$|\xi| \le 1$}.

\section{Existing optimality conditions}
\label{sec:optimality}

%% KKT condition, drawback of KKT
In this section, we review the existing optimality conditions of SOCP. We start with the following first-order optimality conditions.
\begin{definition}
  We say that $x \in \R^n$ satisfies the KKT conditions if $x$ is a feasible point of \eqref{eq:socp} and there exist $\mu \in \R^p$ and $\omega \red{\,:= (\omega_1,\dots,\omega_r)} \in \mathcal{K}$ which satisfy
  \begin{equation*}
    \grad_x L(x, \mu, \omega) = 0,\qquad \red{\inner{g_i(x)}{\omega_i} = 0 \quad \mbox{for all } i = 1,\dots,r,}
  \end{equation*}
  where $L(x, \mu, \omega) := f(x) + \inner{h(x)}{\mu} - \inner{g(x)}{\omega}$.
\end{definition}
We recall that $L\colon \R^n \times \R^p \times \mathcal{K} \to \R$ is the Lagrange function and $\mu$ and $\omega$ are the Lagrange multipliers associated with the equality and the conic constraints, respectively. If $(x, \mu, \omega)$ satisfies the KKT conditions, we say that $x$ is a KKT point. This is one of the most usual necessary conditions for optimality, however, the KKT conditions do not necessarily hold at a local minimum where constraint qualifications (CQ) are violated. One example of CQ is given in the definition below.
\begin{definition}
  Let $x \in \R^n$ be a feasible point of \eqref{eq:socp}. We say that it satisfies Robinson CQ if $Dh(x)$ has full row rank and there exists some $d \in \R^n$ such that
  \begin{equation*}
    g_i(x) + Dg_i(x)d \in \mathrm{int}(\mathcal{K}_i),\quad Dh(x)d = 0,
  \end{equation*}
  for all $i \in \{1, \dots, r\}$.
\end{definition}
Note that Robinson CQ is a generalization of the Mangasarian-Fromovitz constraint qualification (MFCQ) for NLP. This CQ requires the feasible set of the problem to be similar to its first-order approximation around $x$. This restriction does not necessarily hold, which makes the KKT conditions not always suitable.

%% About sequential optimality condition
To overcome the above problem, sequential optimality conditions have been proposed in the literature. These are alternative optimality conditions that do not require constraint qualifications. The most popular of these conditions, the Approximate KKT (AKKT)\red{,} is introduced in~\cite{AN09} for nonlinear programming and extended to SOCP in~\cite{AN19}. We introduce the AKKT conditions for \eqref{eq:socp} below.
\begin{definition}
  We say that $x^* \in \R^n$ satisfies the AKKT conditions if $x^*$ is a feasible point of \eqref{eq:socp} and there exist sequences $\{x^k\} \subset \R^n, \{\mu^k\} \subset \R^p$ and $\{\omega^k\} \subset \mathcal{K}$ which satisfy
  \begin{align}
    \underset{k \to \infty}{\mathrm{lim}} x^k = x^*,\ 
    \underset{k \to \infty}{\mathrm{lim}} \grad_x L(x^k, \mu^k, \omega^k) = 0, 
    \label{eq:akkt1} \\
    i \in I_{I}(x^*) \Rightarrow \omega^k_i \to 0\ \mathrm{and} \nonumber \\
    i \in I_{B}(x^*) \Rightarrow \omega^k_i \to 0\ \mathrm{or}\ \omega^k_i \in \mathrm{bd}^{+}(\mathcal{K}_i)\ \mathrm{with}\ \frac{\overline{\omega^k_i}}{\norm{\overline{\omega^k_i}}} \to -\frac{\overline{g_i(x^*)}}{\norm{\overline{g_i(x^*)}}}, \nonumber
  \end{align}
  where $g_i(x^*) := ([g_i(x^*)]_0, \overline{g_i(x^*)})$ and \red{$\omega_i^k := ([\omega_i^k]_0, \overline{\omega_i^k}) \in \mathcal{K}_i$}.
\end{definition}
It was shown that a local minimizer of \eqref{eq:socp} satisfies the AKKT conditions and some practical algorithms can generate an AKKT point. Moreover, AKKT implies KKT if Robinson CQ is satisfied (see~\cite{AN19}). 

\red{Another well-known sequential optimality condition is the so-called complementary-AKKT (CAKKT). Its definition is similar to that of AKKT, 
except that the conditions involving the limits in~\eqref{eq:akkt1} remain unchanged, while the rest is replaced by
\begin{align*}
  g_i(x^k) \circ \omega^k_i \to 0 & \quad i=1,\dots,r, \\
  h_i(x^k) \mu_i^k \to 0 & \quad i = 1,\dots,p,
\end{align*}
where $g_i(x) \circ \omega_i$ is the Jordan product of $g_i(x)$ and $\omega_i$, i.e., $g_i(x) \circ \omega_i := (\inner{g_i(x)}{\omega_i}, [g_i(x)]_0 \overline{\omega_i} + [\omega_i]_0 \overline{g_i(x)})$.} 
\red{The above definition is defined for nonlinear programming in~\cite{AMS10} and extended for SOCP problems in~\cite{AN19}. Like the AKKT, local minimizes of~\eqref{eq:socp} also satisfies CAKKT, and practical SOCP algorithms can be shown to generate CAKKT points~\cite{AN19,OYF23}. Also, in~\cite[Section~3]{AN19}, it was shown that CAKKT implies AKKT while the converse is not true.}

We now state the following result that shows the relation of AKKT \red{(and CAKKT)} and a penalty function~\cite[Theorems 3.1 \red{and 3.2}]{AN19}.
\begin{theorem}
  \label{theorem:akkt}
  Let $x^* \in \R^n$ be a local \red{minimizer} of \eqref{eq:socp}. Then, for any given sequence $\{\rho_k\} \to +\infty$, there exists a sequence $\{x^k\} \to x^*$ such that each $x^k$ is a local minimizer of the penalty function:
  \begin{equation}
    \label{eq:penalty_function}
    F_k(x)
    := 
      f(x)
    + \frac{1}{4}\norm{x-x^*}^4
    + \frac{\rho_k}{2}
      \left(
        \sum_{i=1}^{r}\norm{\Pi_{\mathcal{K}_i}(-g_i(x))}^2 + \norm{h(x)}^2
      \right).
  \end{equation}
  Also, letting $\mu^k := \rho_k h(x^k)$ and $\omega^k_i := \rho_k\Pi_{\mathcal{K}_i}(-g_i(x^k))$, then $(x^k, \mu^k, \omega^k)$ satisfies \red{both the AKKT and the CAKKT conditions.}
\end{theorem}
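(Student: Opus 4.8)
The plan is to follow the classical external-penalty argument. First I would fix $\delta>0$ so that $f(x^*)\le f(x)$ for every feasible $x$ in the closed ball $\bar{B}(x^*,\delta)$, and for each $k$ take $x^k$ to be a global minimizer of $F_k$ over this compact ball, which exists since $F_k$ is continuous. Because $x^*$ is feasible, $\Pi_{\mathcal{K}_i}(-g_i(x^*))=0$ and $h(x^*)=0$, so $F_k(x^*)=f(x^*)$ and hence $F_k(x^k)\le f(x^*)$ for all $k$. Rearranging gives $\tfrac{\rho_k}{2}\big(\sum_i\norm{\Pi_{\mathcal{K}_i}(-g_i(x^k))}^2+\norm{h(x^k)}^2\big)\le f(x^*)-f(x^k)-\tfrac14\norm{x^k-x^*}^4$, so along any subsequential limit $\bar{x}$ of $\{x^k\}$, which exists by compactness, the bracketed penalty term must vanish as $\rho_k\to\infty$; thus $\bar{x}$ is feasible. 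Local optimality of $x^*$ then yields $f(\bar{x})\ge f(x^*)$, while passing to the limit in $f(x^k)+\tfrac14\norm{x^k-x^*}^4\le f(x^*)$ forces $\tfrac14\norm{\bar{x}-x^*}^4\le 0$, i.e.\ $\bar{x}=x^*$. Since every subsequential limit equals $x^*$, the whole sequence converges, $x^k\to x^*$; the role of the quartic term $\tfrac14\norm{x-x^*}^4$ is precisely to exclude convergence to a different feasible point sharing the value $f(x^*)$. In particular $x^k$ lies in the open ball for large $k$, so it is an unconstrained local minimizer of $F_k$, as claimed.

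Next I would write the stationarity condition. Using the standard fact that $z\mapsto\tfrac12\norm{\Pi_{\mathcal{K}_i}(z)}^2$ is continuously differentiable with gradient $\Pi_{\mathcal{K}_i}(z)$, which holds because the residual of the projection onto a cone is orthogonal to the projection, the chain rule gives $\grad F_k(x^k)=\grad f(x^k)+\norm{x^k-x^*}^2(x^k-x^*)+\rho_k Dh(x^k)^{\T}h(x^k)-\rho_k\sum_i Dg_i(x^k)^{\T}\Pi_{\mathcal{K}_i}(-g_i(x^k))$. Setting $\grad F_k(x^k)=0$ and inserting $\mu^k=\rho_k h(x^k)$ and $\omega_i^k=\rho_k\Pi_{\mathcal{K}_i}(-g_i(x^k))$, I obtain $\grad_x L(x^k,\mu^k,\omega^k)=-\norm{x^k-x^*}^2(x^k-x^*)\to 0$, which is \eqref{eq:akkt1}. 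Note that $\omega_i^k\in\mathcal{K}_i$ automatically, since it is a nonnegative multiple of a projection onto $\mathcal{K}_i$.

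The directional requirements are then read off from Lemma~\ref{lemma:projection}. For $i\in I_I(x^*)$, continuity gives $-g_i(x^k)\in\mathrm{int}(-\mathcal{K}_i)$ for large $k$, so $\Pi_{\mathcal{K}_i}(-g_i(x^k))=0$ and $\omega_i^k\to 0$. For $i\in I_B(x^*)$, the eigenvalues $\eta_1^k,\eta_2^k$ of $z^k:=-g_i(x^k)$ satisfy $\eta_1^k\to-2\norm{\overline{g_i(x^*)}}<0$ and $\eta_2^k\to 0$, so for large $k$ the projection reduces to $\max\{0,\eta_2^k\}u^{2,k}$; hence either $\omega_i^k=0$ or $\omega_i^k\in\mathrm{bd}^+(\mathcal{K}_i)$ with $\overline{\omega_i^k}/\norm{\overline{\omega_i^k}}=-\overline{g_i(x^k)}/\norm{\overline{g_i(x^k)}}\to-\overline{g_i(x^*)}/\norm{\overline{g_i(x^*)}}$, matching the AKKT definition. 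Together with feasibility of $x^*$, this proves the AKKT part.

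Finally, for CAKKT I would use the bound $\rho_k\big(\sum_i\norm{\Pi_{\mathcal{K}_i}(-g_i(x^k))}^2+\norm{h(x^k)}^2\big)\to 0$ already derived, its right-hand side tending to $0$ because $x^k\to x^*$. The equality part is immediate: $h_i(x^k)\mu_i^k=\rho_k h_i(x^k)^2\to 0$. For the conic part, a spectral computation via Lemma~\ref{lemma:projection} gives $z^k\circ\Pi_{\mathcal{K}_i}(z^k)=(\max\{0,\eta_1^k\})^2 u^{1,k}+(\max\{0,\eta_2^k\})^2 u^{2,k}$, whence $\norm{z^k\circ\Pi_{\mathcal{K}_i}(z^k)}\le\sqrt{2}\,\norm{\Pi_{\mathcal{K}_i}(z^k)}^2$; therefore $\norm{g_i(x^k)\circ\omega_i^k}=\rho_k\norm{z^k\circ\Pi_{\mathcal{K}_i}(z^k)}\le\sqrt{2}\,\rho_k\norm{\Pi_{\mathcal{K}_i}(-g_i(x^k))}^2\to 0$. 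The main obstacle is exactly this last step: one must upgrade the qualitative decay of the penalty residual into the quantitative estimate $\norm{g_i(x^k)\circ\omega_i^k}\le C\rho_k\norm{\Pi_{\mathcal{K}_i}(-g_i(x^k))}^2$, which is where the spectral structure of the second-order cone, rather than mere continuity, is needed, and this is what distinguishes the SOCP argument from its NLP counterpart.
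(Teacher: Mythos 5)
Your proof is correct and takes essentially the same route as the source of this result: the paper does not prove Theorem~\ref{theorem:akkt} itself but quotes it from~\cite[Theorems 3.1 and 3.2]{AN19}, and the argument there is exactly your external-penalty scheme (global minimization of $F_k$ over a small closed ball around $x^*$, the quartic term forcing every subsequential limit to equal $x^*$, and the multipliers read off from $\nabla F_k(x^k)=0$ via the gradient formula for $\tfrac12\norm{\Pi_{\mathcal{K}_i}(\cdot)}^2$). Your key quantitative step for the CAKKT part --- combining $F_k(x^k)\le f(x^*)$ with continuity of $f$ to get $\rho_k\big(\sum_{i}\norm{\Pi_{\mathcal{K}_i}(-g_i(x^k))}^2+\norm{h(x^k)}^2\big)\to 0$, and then the spectral estimate $\norm{z\circ\Pi_{\mathcal{K}_i}(z)}\le\sqrt{2}\,\norm{\Pi_{\mathcal{K}_i}(z)}^2$, which is valid since $z\circ\Pi_{\mathcal{K}_i}(z)=\max\{0,\eta_1\}^2u^1+\max\{0,\eta_2\}^2u^2$ with orthogonal eigenvectors of norm $1/\sqrt{2}$ --- is likewise the standard mechanism by which the cited proof upgrades AKKT to CAKKT.
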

%

%% Basic concepts for second-order analysis (WCR, sigma term)
As a way to obtain more information, we can consider the second-order conditions of the problem. To discuss second-order conditions of SOCP, we introduce some concepts associated with second-order analyses. First, we define the weak constant rank (WCR) property that will be necessary to introduce second-order conditions.
\begin{definition}
  \label{definition:wcr}
  We say that the weak constant rank (WCR) property holds at a feasible point $x^* \in \R^n$ of \eqref{eq:socp} if there exists a neighborhood $\mathcal{N}$ of $x^*$ such that
  \begin{equation*}
    \{\grad h_i(x)\}_{\{i=1,\dots,p\}}
    \: \bigcup \: \{\grad g_{ij}(x)\}_{i \in I_0(x^*), \: j \in \{1,\dots,m_i\}}
    \: \bigcup \: \{Dg_i(x)^{\T}\Gamma_i\tilde{g}_{i}(x)\}_{i \in I_{B}(x^*)}
  \end{equation*}
  has the same rank for all $x \in \mathcal{N}$, where $\tilde{g}_{i}(x) := (\norm{\overline{g_{i}(x)}}, \overline{g_{i}(x)})$ and
  \begin{equation}
    \label{eq:gamma}
    \Gamma_i := \left(\begin{array}{cc} 1 & 0^{\T} \\ 0 & -I_{m_i-1} \end{array}\right)
  \end{equation}
  for all $i \in \{1,\dots,r\}$.
\end{definition}
\red{When \eqref{eq:socp} is indeed an NLP, i.e., $m_i=1$ for all $i$, the above WCR property becomes equivalent to the WCR of NLP introduced in~\cite{AN07}. Note also that WCR is not a CQ on its own~\cite[Exampple 5.1]{AN07}. Moreover, a well-known CQ for conic optimization is nondegeneracy, which can be seen as a generalization of the linear independence CQ (LICQ) for NLP problems, as it also implies the existence of a unique Lagrange multiplier.
From~\cite{EL20}, we can note that} nondegeneracy implies Robinson CQ and WCR, a property that also holds in NLP. Furthermore, there exist examples of points satisfying Robinson CQ and WCR but not nondegeneracy. Therefore, the joint condition ``Robinson CQ and WCR'' is strictly weaker than nondegeneracy~\cite[Example 5.2]{AN07}.

The sigma-term introduced in~\cite{CO90}, which represents a possible curvature of $\mathcal{K}$ at $g(x)$, is important to consider second-order conditions of SOCP. We define the sigma-term associated to \red{$x^*$} as follows:
\begin{equation}
  \label{eq:sigma-term}
  \sigma(x, \omega) := \sum_{i \in I_{B}(x^*)} \sigma_{i}(x, \omega),
\end{equation}
where
\begin{equation*}
  \sigma_{i}(x, \omega)
  := 
    -\frac{[\omega_{i}]_{0}}{[g_{i}(x)]_{0}}
    Dg_{i}(x)^{\T}\Gamma_{i}Dg_{i}(x) \quad 
    \mathrm{for\ all\ } i \in I_{B}(x^*).
\end{equation*}
Note that \red{the term $\sigma(x, \omega)$ does not appear in NLP.} In fact, the difficulty of generalization of second-order analyses from NLP to SOCP arises from this sigma-term.

%% About BSONC and its drawback
As a classical second-order condition, we state the basic second-order necessary condition (BSONC), which is satisfied if Robinson CQ holds at a local minimum $x^*$ of SOCP as follows.
\begin{definition}
  Let $x^* \in \R^n$ be a feasible point of \eqref{eq:socp}. We say that $x^*$ satisfies BSONC if for every $d \in C(x^*)$ there are Lagrange multipliers $\mu^*_d \in \R^p$ and $\omega^*_d \in \mathcal{K}$ such that $(x^*, \mu^*_d, \omega^*_d)$ is a KKT \red{triple} and
  \begin{equation*}
    d^{\T}(\grad^2_x L(x^*, \mu^*_d, \omega^*_d) + \sigma(x^*, \omega^*_d))d \geq 0,
  \end{equation*}
  where $C(x^*)$ is the critical cone of \eqref{eq:socp} at $x^*$.
\end{definition}
The critical cone of \eqref{eq:socp} at $x^*$ is denoted by
\[
  C(x^*) := \left\{ d \in \R^n \, \middle| \,
  \begin{array}{l}
     \inner{\grad f(x^*)}{d} = 0; Dh(x^*)^{\T}d = 0; \\
     Dg_i(x^*)^{\T}d \in T_{\mathcal{K}_i}(g_i(x^*)) \: \forall i \in \{1, \dots, r\}
  \end{array}
  \right\},
\]
where
\begin{equation*}
  T_{\mathcal{K}_i}(g_i(x^*)) := \left\{d \in \R^{m_i} \middle|
  \begin{array}{l}
    \exists\{d^k\}_{k \in \mathbb{N}} \to d, \exists\{\alpha_k\}_{k \in \mathbb{N}} \subset \R_{+}, \alpha_k \to 0, \\
    \forall k \in \mathbb{N}, g_i(x^*) + \alpha_k d^k \in \mathcal{K}_i
  \end{array}
  \right\},
\end{equation*}
is the tangent cone of $\mathcal{K}_i$ at $g_i(x^*)$. One of the disadvantages of BSONC is that we must find $\mu^*_d$ and $\omega^*_d$ which rely on $d$ for all $d \in C(x^*)$, which is not always practical to calculate. Moreover, there is an example that practical algorithms fail to find a BSONC point even in a simple case~\cite[Section 2]{GO99}.

%% About WSOC (assumption: LICQ, nondegeneracy -> CRCQ, Robinson CQ+WCR)
A more practical optimality condition than BSONC is the weak second-order necessary condition (WSONC), which is defined as follows.
\begin{definition}
  \label{definition:wsonc}
  Let $(x^*, \mu^*, \omega^*) \in \R^n \times \R^p \times \mathcal{K}$ be a KKT \red{triple}. We say that $x^*$ satisfies WSONC if
  \begin{equation*}
    d^{\T}(\grad^2_{x} L(x^*, \mu^*, \omega^*) + \sigma(x^*, \omega^{*}))d \geq 0
  \end{equation*}
  for all $d \in S(x^*)$ where 
  \begin{equation*}
    S(x^*)
    :=
    \left\{d \in \R^n \middle|
      \begin{array}{l}
        Dh(x^*)d = 0; Dg_{i}(x^*)d = 0, i \in I_{0}(x^*); \\
        g_{i}(x^*)^{\T}\Gamma_{i}Dg_{i}(x^*)d = 0, i \in I_{B}(x^*)
      \end{array}
    \right\}.
  \end{equation*}
\end{definition}
Note that $S(x^*)$ is equivalent to the largest subspace contained in $C(x^*)$, namely, $S(x^*) = C(x^*) \cap -C(x^*)$. Since Lagrange multipliers in WSONC are independent of $d \in S(x^*)$, it is easier to verify than BSONC. 
\red{While some authors question the existence of an algorithm that considers the entire critical cone $C(x^*)$, WSONC has been shown to be satisfied under weaker assumptions, for example, for barrier-type and augmented Lagrangian methods~\cite{ABMS10, MP03}.} Although WSONC is a more practical condition, there is still a restriction related to CQs. In fact, Robinson CQ alone is not enough to guarantee fulfillment of WSONC at a local minimizer of SOCP~\cite{BA04}. Therefore, WSONC requires a stronger CQ such as nondegeneracy or LICQ as an optimality condition. Recently, it has been shown that the strictly weaker condition ``Robinson CQ and WCR'' is enough for the purpose~\cite[Theorem 6]{EL20}, which is a natural extension of the result in NLP~\cite[Theorem 3.1]{AN07}.

\section{Second-order sequential optimality conditions}
\label{sec:akkt2}

%% Definition of AKKT2 for SOCP
In this section, we extend concepts of sequential second-order optimality conditions for NLP to SOCP. As it was mentioned before, the condition should have three properties: (i) it is an optimality condition that does not require CQ, (ii) it implies WSONC for a weak constraint qualification, (iii) its validity can be verified in sequences generated by practical algorithms. We define our sequential second-order optimality condition for SOCP as follows.
\begin{definition}
  \label{def:akkt2}
  We say that the \red{feasible} point $x^*$ satisfies the AKKT2 conditions \red{if there exist sequences} $\R^n \supset \{x^k\} \to x^*$, $\{\mu^k\} \subset \R^p$ and $\{\omega^k\} \subset \mathcal{K}$ \red{satisfying} the AKKT conditions and, \red{in addition,}
  there exist  $\{\eta^k\} \subset \R^p,\ \{\theta_{i}^k\}_{i \in I_{0}(x^*)} \subset \R, \{\gamma_{i}^{k}\}_{i \in I_{B}(x^*)} \subset \R$ and $\R_{+} \supset \{\delta^k\} \to 0$ such that
  \begin{align*}
      d^{\T}\left(
      \grad^{2}_{x}L(x^k, \mu^k, \omega^k)
      + \sigma(x^k, \omega^k)
      + \sum_{i=1}^{p}\eta^{k}_{i}\grad h_i(x^k)\grad h_i(x^k)^{\T}\right. \\
      + \sum_{i \in I_{B}(x^*)}
      \gamma_i^k
      (Dg_{i}(x^k)^{\T}\Gamma_{i}\tilde{g}_{i}(x^k))
      (Dg_{i}(x^k)^{\T}\Gamma_{i}\tilde{g}_{i}(x^k))^{\T} \\
      + \sum_{i \in I_{0}(x^*)} \theta_{i}^{k}Dg_{i}(x^k)^{\T}Dg_{i}(x^k) 
      + \delta_{k}I_n \Bigg)d \geq 0 
      \quad 
  \end{align*}
  for all $d \in \R^n$ when $k$ is sufficiently large, and where $\tilde{g}_{i}(x^k) := \big(\norm{\overline{g_{i}(x^k)}}, \overline{g_{i}(x^k)}\big)^{\T}$.
  %
  %\begin{equation}
  %  \label{eq:akkt2_t}
  %  U(x, x^*) := \{
  %    d \in \R^n \mid \tilde{g}_i(x)^{\T} \Gamma_i Dg_i(x)d = 0, i \in I_B(x^*), g_i(x) \in \R^m %\backslash (\mathcal{K}_i \cup -\mathcal{K}_i)
  %  \}.
  %\end{equation}
  %.
\end{definition}
We call $\{x^k\}$ \red{satisfying the conditions of Definition~\ref{def:akkt2} an AKKT2 sequence}. We can easily show that AKKT2 for \eqref{eq:socp} is a generalization of AKKT2 for nonlinear programming~\cite[Definition 3.1]{AN17}, considering $I_0(x^*)$ as the set of active indices at $x^*$ and $I_{B}(x^*) = \emptyset$. We point out that $\delta_k I_n$ in the definition can be replaced with $\Delta_k \to \red{0}$ where $\Delta_k \in \R^{n \times n}$ to have a more general definition. \red{We also note that a definition for CAKKT2 can be similarly provided by simply replacing the first-order conditions AKKT 
with CAKKT. This had been done for nonlinear programming in~\cite{Hae18},
except for the strong complementarity measure.}

%% Necessary lemmas and properties to proof
In the following, we discuss properties of AKKT2 for \eqref{eq:socp}. Let us first show the following lemmas which will be used in our main results.
\begin{lemma}
  \label{lemma:inner_semicontinuity}
  Let $x^* \in \R^n$ be a feasible point of~\eqref{eq:socp} and let $\{x^k\} \to x^*$. Define
  \begin{equation*}
    \red{S(x, x^*)}
    :=
    \left\{
      d \in \R^n \, \middle| \,
      \begin{array}{l}
        Dh(x)d = 0; Dg_{i}(x)d = 0, i \in I_{0}(x^*); \\
        \tilde{g}_{i}(x)^{\T}\Gamma_{i}Dg_{i}(x)d = 0, i \in I_{B}(x^*)
      \end{array}
    \right\},
  \end{equation*}
   and let the mapping $x \mapsto S(x, x^*)$ be inner semicontinuous at $x^*$. Then, \red{for each $d \in S(x^*)$}, there is a sequence $\{d^k\} \subset S(x^k, x^*)$ which fulfills $\{d^k\} \to d$. Moreover, the mapping $x \mapsto S(x, x^*)$ is inner semicontinuous at $x^*$ if and only if the WCR property holds at $x^*$.
\end{lemma}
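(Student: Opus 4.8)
The plan is to recast $S(x,x^*)$ as the kernel of a single continuous matrix-valued map and then read off all three claims from the interplay between local constancy of rank and inner semicontinuity of the kernel map. First I would collect the defining vectors of $S(x,x^*)$ into the rows of one matrix $A(x)$: its rows are $\grad h_i(x)^\T$ for $i=1,\dots,p$, the rows of $Dg_i(x)$ for $i\in I_0(x^*)$, and $(Dg_i(x)^\T\Gamma_i\tilde g_i(x))^\T$ for $i\in I_B(x^*)$. Then $S(x,x^*)=\ker A(x):=\{d\in\R^n\mid A(x)d=0\}$, and the rows of $A(x)$ are exactly the WCR family of Definition~\ref{definition:wcr}. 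Two elementary observations are needed. Since $f,g,h$ are $C^2$ and, for $i\in I_B(x^*)$, $\overline{g_i(x^*)}\neq0$ (so $\overline{g_i(x)}\neq0$ and hence $\tilde g_i$ is continuous) on a neighborhood of $x^*$, the map $A(\cdot)$ is continuous near $x^*$. Moreover, at $x=x^*$ we have $[g_i(x^*)]_0=\norm{\overline{g_i(x^*)}}$ for $i\in I_B(x^*)$, so $\tilde g_i(x^*)=g_i(x^*)$ and therefore $S(x^*,x^*)=S(x^*)$, with $S(x^*)$ as in Definition~\ref{definition:wsonc}. Finally $\mathrm{rank}\,A(x)$ equals the dimension of the span of the WCR family, so WCR at $x^*$ is equivalent to $\mathrm{rank}\,A(\cdot)$ being constant near $x^*$, equivalently to $\dim\ker A(\cdot)=n-\mathrm{rank}\,A(\cdot)$ being constant near $x^*$.

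With these reductions the first assertion is immediate: assuming $x\mapsto S(x,x^*)=\ker A(x)$ is inner semicontinuous at $x^*$, any $d\in S(x^*)=S(x^*,x^*)$ lies in the value at $x^*$, so by the definition of inner semicontinuity applied to the given sequence $x^k\to x^*$ there is a selection $d^k\in S(x^k,x^*)$ with $d^k\to d$.

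For the equivalence I would argue both directions. For WCR $\Rightarrow$ inner semicontinuity: when $\mathrm{rank}\,A(\cdot)$ is constant near $x^*$, the Moore--Penrose pseudoinverse $x\mapsto A(x)^\dagger$ is continuous there (the classical continuity of the pseudoinverse under locally constant rank), hence so is the orthogonal projector $P(x):=I_n-A(x)^\dagger A(x)$ onto $\ker A(x)$. Given $d\in\ker A(x^*)$ we have $P(x^*)d=d$, so setting $d^k:=P(x^k)d\in\ker A(x^k)=S(x^k,x^*)$ yields $d^k\to P(x^*)d=d$, which is exactly inner semicontinuity. For inner semicontinuity $\Rightarrow$ WCR: rank is lower semicontinuous, so $\mathrm{rank}\,A(x)\geq\mathrm{rank}\,A(x^*)$ near $x^*$, and it suffices to exclude a strict jump. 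Suppose, for contradiction, that some $x^k\to x^*$ has $\dim\ker A(x^k)<\dim\ker A(x^*)$. Fix a basis $d^1,\dots,d^s$ of $\ker A(x^*)$; inner semicontinuity provides, for each $j$, selections $d^{j,k}\in\ker A(x^k)$ with $d^{j,k}\to d^j$. Since $d^1,\dots,d^s$ are linearly independent, the Gram determinant of $d^{1,k},\dots,d^{s,k}$ converges to a nonzero value, so these vectors are linearly independent for large $k$, forcing $\dim\ker A(x^k)\geq s=\dim\ker A(x^*)$, a contradiction. Hence $\dim\ker A(\cdot)$, and thus the rank, is constant near $x^*$, i.e.\ WCR holds.

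I expect the main obstacle to be the WCR $\Rightarrow$ inner semicontinuity direction: producing a genuinely convergent selection requires a continuous choice of projector onto the moving kernel, which rests on continuity of the pseudoinverse under locally constant rank, a classical but nontrivial fact. The opposite direction, together with the reductions, is a softer dimension count combined with the persistence of linear independence; the one point that still demands care there is the continuity of the $I_B$-rows of $A$, which holds precisely because $\overline{g_i(x^*)}\neq0$ for $i\in I_B(x^*)$.
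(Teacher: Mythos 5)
Your proof is correct, but it takes a genuinely different and more self-contained route than the paper. For the equivalence between inner semicontinuity and WCR, the paper gives no argument at all: it simply cites \cite[Lemma 1]{EL20}, whereas you prove both directions from scratch --- WCR $\implies$ inner semicontinuity via continuity of the Moore--Penrose pseudoinverse under locally constant rank, which yields the continuous projector $P(x) = I_n - A(x)^\dagger A(x)$ onto $\ker A(x)$ and the explicit selection $d^k := P(x^k)d$, and the converse via lower semicontinuity of the rank combined with the Gram-determinant persistence of linear independence. For the first claim, the paper works with the distance characterization of inner semicontinuity, $\limsup_{y \to x^*} \mathrm{dist}[d, S(y,x^*)] = 0$, and runs an explicit $1/n$-argument; notably, its construction only produces a convergent selection along a subsequence $\{x^{k_n}\}$, while your appeal to the sequential (Kuratowski $\liminf$) definition yields a selection along the whole sequence, which is what the lemma literally asserts. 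Two minor caveats: your first claim is ``immediate'' only if the sequential definition is taken as the definition of inner semicontinuity --- if the distance characterization is the working definition, as in the paper, one needs the standard equivalence between the two; and your remark that continuity of $\tilde{g}_i$ near $x^*$ requires $\overline{g_i(x^*)} \neq 0$ is an overstatement, since $x \mapsto (\norm{\overline{g_i(x)}}, \overline{g_i(x)})$ is continuous everywhere (nonvanishing would only matter for differentiability). Finally, your explicit verification that $S(x^*,x^*) = S(x^*)$, via $\tilde{g}_i(x^*) = g_i(x^*)$ for $i \in I_B(x^*)$, makes precise a point the paper uses only implicitly; together with the self-contained proof of the WCR equivalence, your write-up is strictly more complete than the paper's.
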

\noindent
\begin{proof}
The second statement is shown in~\cite[Lemma 1]{EL20}.
By inner semicontinuity of $x \mapsto S(x^k, x^*)$, we have
\begin{equation*}
  S(x^*) \subseteq \left\{ d \in \R^n \, \middle| \, \limit{y \to x^*}\text{sup\ dist} [d, S(y, x^*)] = 0 \right\},
\end{equation*}
where $\text{dist} [d, S(y, x^*)]$ denotes the distance of $d$ to the set $S(y, x^*)$. Recalling that $d \in S(x^*)$ holds, we obtain
\begin{equation*}
  0 = \limit{y \to x^*}\text{sup\ dist} [d, S(y, x^*)] = \limit{n \to \infty}\text{sup}\left\{\text{dist}[d, S(y, x^*)] \, \middle| \, \norm{y-x^*} \leq \frac{1}{n}\right\}.
\end{equation*}

Now, let us define $t_n := \text{sup} \{\text{dist}[d, S(y, x^*)] \mid \norm{y-x^*} \leq 1/n\}$. It is obvious that $t_n \to 0$ when $n \to \infty$.
Since $x^k \to x^*$, there exists a positive integer $k_n \geq n$ such that $\norm{x^{k_n} - x^*} \leq 1/n$. Therefore, $\text{dist}[d, S(x^{k_n}, x^*)] \leq t_n$ holds.
From the definition of distance, we also have $\text{dist}[d, S(x^{k_n}, x^*)] := \text{inf}\{\norm{d-s} \mid s \in S(x^{k_n}, x^*)\}$. Taking this into account, there exists $d_n \in S(x^{k_n}, x^*)$ such that
\begin{align*}
  \norm{d_n - d} & <
  \text{inf} \{\norm{d-s} \mid s \in S(x^{k_n}, x^*)\} + \frac{1}{n} \\
  & =
  \text{dist}[d, S(x^{k_n}, x^*)] + \frac{1}{n}.
\end{align*}
Therefore, we obtain $\norm{d_n - d} < t_n + 1/n \to 0$ when $n \to \infty$. In conclusion, we can take a subsequence $\{x^{k_n}\} \subset \{x^k\}$ such that there exists $S(x^{k_n}, x^*) \supset \{d_n\} \to d$.
\end{proof}

\begin{lemma}
    \label{lemma:psd_matrix}
    Let $\beta, \xi \in \R$ and $b \in \R^{s-1}$. Then, the matrix
%    Let $\alpha, \beta, \nu, \xi \in \R$ and $b \in \R^{s-1}$. Then, the matrix
    \[
    P = 
    \left[
    \begin{array}{cc}
         \beta & b^\T \\
         b & \displaystyle{\frac{1}{\beta} bb^\T} + \xi I_{s-1}
%        \beta & \alpha b^\T \\
%        \alpha b & \nu bb^\T + \xi I_{s-1}
    \end{array}
    \right]
    \]
    is positive semidefinite when $\xi \ge 0$ and $\beta > 0$. \red{Additionally, if $\xi > 0$, then $P$ is positive definite.}
%    is positive semidefinite when $\xi \ge 0$, $\nu > 0$ and $\beta \ge \alpha^2/\nu$.
\end{lemma}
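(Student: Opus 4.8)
The plan is to exploit the block structure of $P$ through a Schur complement argument, which makes the role of each hypothesis transparent. Since the top-left block $\beta$ is a positive scalar (because $\beta > 0$), positive semidefiniteness of $P$ is equivalent to positive semidefiniteness of the Schur complement of $\beta$ in $P$, namely $\big(\tfrac{1}{\beta}bb^\T + \xi I_{s-1}\big) - b\,\beta^{-1}\,b^\T$. The key observation is that the two rank-one terms cancel exactly, leaving simply $\xi I_{s-1}$. Hence $P \succeq O$ if and only if $\xi I_{s-1} \succeq O$, i.e.\ if and only if $\xi \ge 0$, and $P \succ O$ if and only if $\xi > 0$. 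This immediately yields both assertions.

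Alternatively, and more elementarily, I would verify the sign of the quadratic form directly, which also makes the strict case effortless. Writing $v := (v_0, \overline{v}) \in \R \times \R^{s-1}$ and setting $t := b^\T \overline{v}$, a short computation gives
\[
  v^\T P v
  = \beta v_0^2 + 2 v_0 t + \frac{1}{\beta} t^2 + \xi \norm{\overline{v}}^2
  = \left(\sqrt{\beta}\, v_0 + \frac{t}{\sqrt{\beta}}\right)^2 + \xi \norm{\overline{v}}^2 .
\]
The completion of squares is legitimate precisely because $\beta > 0$. The right-hand side is a sum of a perfect square and $\xi \norm{\overline{v}}^2$, which is nonnegative whenever $\xi \ge 0$; this proves $P \succeq O$.

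For the positive-definite claim when $\xi > 0$, I would argue by tracing the equality conditions in the identity above: $v^\T P v = 0$ forces both summands to vanish. The term $\xi \norm{\overline{v}}^2 = 0$ yields $\overline{v} = 0$ (using $\xi > 0$), whereupon $t = b^\T \overline{v} = 0$ and the remaining square reduces to $\beta v_0^2 = 0$, giving $v_0 = 0$ (using $\beta > 0$). Thus $v = 0$, so $P \succ O$. There is essentially no obstacle here: the only point requiring care is to exploit the exact rank-one cancellation (equivalently, the exact completion of squares) so that the conclusion depends solely on the sign of $\xi$, and to handle the strict case by following the equality conditions rather than invoking a separate argument.
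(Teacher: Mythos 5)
Your proof is correct, and your second argument (completion of squares on the quadratic form, then tracing equality conditions for the strict case) is essentially identical to the paper's own proof: the paper writes $u^\T P u = \bigl(\sqrt{\beta}\,u_0 + \tfrac{1}{\sqrt{\beta}}(b^\T \bar u)\bigr)^2 + \xi \norm{\bar u}^2$ and then handles $\xi > 0$ by a short case analysis on whether $\bar u$ vanishes. Your first argument, via the Schur complement, is a genuinely different and somewhat sharper route: since $\beta > 0$, positive (semi)definiteness of $P$ is equivalent to that of the Schur complement $\bigl(\tfrac{1}{\beta}bb^\T + \xi I_{s-1}\bigr) - \tfrac{1}{\beta}bb^\T = \xi I_{s-1}$, so you get the \emph{equivalence} $P \succeq O \Leftrightarrow \xi \ge 0$ (and $P \succ O \Leftrightarrow \xi > 0$) rather than just the implications stated in the lemma, and the exact rank-one cancellation that makes the lemma work is laid bare in one line. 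What the paper's elementary approach buys in exchange is self-containedness: it needs no block-matrix machinery, only the hypothesis $\beta > 0$ to legitimize the completed square, which fits the paper's style of keeping Lemma~\ref{lemma:psd_matrix} as a minimal standalone tool for the positive semidefiniteness check in Lemma~\ref{lemma:akkt2_inequality}.
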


\begin{proof}
    \red{Assume that $\xi \ge 0$ and $\beta > 0$, and} let us prove that $u^\T P u \ge 0$ for all $u = (u_0, \bar{u}) \in \R^s$. Simple calculations show that
    \begin{align*}
        u^\T P u & = \beta u_0^2 + 2 u_0 b^\T \bar{u} 
        + \frac{1}{\beta} \bar{u}^\T b b^\T \bar{u} + \xi \norm{\bar{u}}^2 \\
        & = \left( \sqrt{\beta} u_0 + \frac{1}{\sqrt{\beta}} \big( b^\T \bar{u} \big) \right)^2
        + \xi \norm{\bar{u}}^2 \ge 0,
%         u^\T P u & = \beta u_0^2 + 2 \alpha u_0 b^\T \bar{u} 
%         + \nu \bar{u}^\T b b^\T \bar{u} + \xi \norm{\bar{u}}^2 \\        
%         & = \left( \beta - \frac{\alpha^2}{\nu} \right) u_0^2 
%         + \nu \left(b^\T \bar{u} + \frac{\alpha}{\nu} u_0 \right)^2 + \xi \norm{\bar{u}}^2 \ge 0,
    \end{align*}
     which shows that \red{$P \succeq 0$. Now, assume that $\xi > 0$ and take $u = (u_0, \bar{u}) \ne 0$. If $\bar{u} = 0$, then $u_0 \ne 0$ and thus the above inequality holds strictly. When $\bar{u} \ne 0$, we also conclude that $u^\T P u > 0$, which means that $P \succ 0$.}
\end{proof}

\red{Before presenting the main result, we discuss some assumptions. To do so, we introduce the following generalized Lojasiewicz inequality, which is a weak assumption on the smoothness of the constraint functions~\cite{AMS10}.}
\begin{definition}
  \label{def:los}
  \red{A function $Q$ satisfies the generalized Lojasiewicz inequality at $x^*$ if there exists $\delta > 0$ and $\vartheta \colon B(x^*,\delta) \to \R$ satisfying $\lim_{x \to x^*} \vartheta(x) = 0$ and 
  \[
    |Q(x) - Q(x^*)| \le \vartheta(x) \norm{\nabla Q(x)},
  \]
  where $B(x^*, \delta)$ is the Euclidean ball with radius $\delta$ 
  around $x^*$.}
\end{definition}

\begin{lemma}
  \label{lem:los}
  \red{Let $x^* \in \R^n$ be a local minimizer of \eqref{eq:socp}, and $\{\rho_k\} \to +\infty$. Take sequences $\{x^k\} \to x^*$, $\{ \mu^k \}$ and $\{\omega_i^k\}$ as in Theorem \ref{theorem:akkt}. Assume that the feasibility measure of~\eqref{eq:socp} given by
  \begin{equation}
    \label{eq:Q}
    Q(x) := \frac{1}{2}\norm{h(x)}^2 + \frac{1}{2}\norm{\Pi_{\mathcal{K}}(-g(x))}^2
  \end{equation}  
  satisfies the generalized Lojasiewicz inequality at $x^*$.
  Then, for all $i$, we have
  \[
  \lim_{k \to \infty} \frac{(\mu_i^k)^2}{\rho_k} = 0 \quad \mbox{and} \quad 
  \lim_{k \to \infty} \frac{\norm{\omega_i^k}^2}{\rho_k} = 0.
  \]}
\end{lemma}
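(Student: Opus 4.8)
The plan is to reduce both claims to the single statement $\rho_k Q(x^k)\to 0$. First I would observe that the two target sums collapse into the feasibility measure $Q$ of~\eqref{eq:Q}: since $\mu^k = \rho_k h(x^k)$ and $\omega_i^k = \rho_k \Pi_{\mathcal{K}_i}(-g_i(x^k))$, a direct computation gives
\[
  \sum_{i=1}^p \frac{(\mu_i^k)^2}{\rho_k} + \sum_{i=1}^r \frac{\norm{\omega_i^k}^2}{\rho_k}
  = \rho_k\Big(\norm{h(x^k)}^2 + \norm{\Pi_{\mathcal{K}}(-g(x^k))}^2\Big)
  = 2\rho_k Q(x^k).
\]
As every summand on the left is nonnegative, it suffices to prove $\rho_k Q(x^k)\to 0$, which then forces each individual term, and hence both stated limits, to vanish.

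To control $\rho_k Q(x^k)$ I would exploit the stationarity of $x^k$. Writing $F_k$ of~\eqref{eq:penalty_function} as $F_k(x) = f(x) + \frac{1}{4}\norm{x-x^*}^4 + \rho_k Q(x)$ and recalling that $z \mapsto \frac{1}{2}\norm{\Pi_{\mathcal{K}}(z)}^2$ is continuously differentiable with gradient $\Pi_{\mathcal{K}}(z)$ (by Moreau's decomposition, since it coincides with half the squared distance to the polar cone), the function $Q$ is $C^1$, and so is $F_k$. Because $x^k$ is an unconstrained local minimizer of $F_k$, we have $\grad F_k(x^k)=0$, that is,
\[
  \rho_k \grad Q(x^k) = -\grad f(x^k) - \norm{x^k-x^*}^2(x^k-x^*).
\]
Since $x^k \to x^*$, the right-hand side converges to $-\grad f(x^*)$; in particular there is a constant $C$ with $\rho_k\norm{\grad Q(x^k)} \le C$ for all large $k$.

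The final step is to invoke the generalized Lojasiewicz inequality of Definition~\ref{def:los}. As $x^*$ is feasible, $Q(x^*)=0$ and $Q\ge 0$, so the inequality reads $Q(x^k) \le \vartheta(x^k)\norm{\grad Q(x^k)}$ for $x^k$ close to $x^*$. Multiplying by $\rho_k$ and using the bound from the previous step yields
\[
  \rho_k Q(x^k) \le \vartheta(x^k)\,\rho_k\norm{\grad Q(x^k)} \le C\,\vartheta(x^k) \to 0,
\]
because $\vartheta(x^k)\to 0$. This gives $\rho_k Q(x^k)\to 0$ and closes the argument.

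I expect the crux to be conceptual rather than computational: the hypothesis of Theorem~\ref{theorem:akkt} only guarantees that each $x^k$ is a \emph{local} minimizer of $F_k$, so one cannot simply compare $F_k(x^k)$ with $F_k(x^*)$ to bound the penalty. The generalized Lojasiewicz inequality is precisely what upgrades the gradient bound $\rho_k\norm{\grad Q(x^k)}\le C$ into the function-value decay $\rho_k Q(x^k)\to 0$; the only other point needing care is the differentiability of $Q$, which legitimizes writing $\grad F_k(x^k)=0$.
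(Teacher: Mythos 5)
Your proof is correct and follows essentially the same route as the paper's: both arguments reduce the claim to showing $\rho_k Q(x^k) = \tfrac{1}{2\rho_k}\big(\norm{\mu^k}^2 + \norm{\omega^k}^2\big) \to 0$ and close it by applying the generalized Lojasiewicz inequality with $Q(x^*) = 0$ together with a uniform bound on $\rho_k \norm{\nabla Q(x^k)}$. The only cosmetic difference is how that gradient bound is obtained: you use the exact stationarity $\nabla F_k(x^k) = 0$ of the penalty function directly, while the paper writes $\rho_k \nabla Q(x^k) = \nabla_x L(x^k,\mu^k,\omega^k) - \nabla f(x^k)$ and invokes the AKKT estimate $\norm{\nabla_x L(x^k,\mu^k,\omega^k)} \le \varepsilon_k$, which itself derives from that same stationarity, so the two arguments coincide in substance.
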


\begin{proof}
  \red{For sufficiently large $k$, we have $\lim_{k \to \infty} \vartheta(x^k) = 0$ and
  \[
    |Q(x^k) - Q(x^*)| \le \vartheta(x^k) \norm{\nabla Q(x^k)}.
  \]
  Since $x^*$ is feasible, $Q(x^*) = 0$. Thus, from the definition of $Q$, $\mu^k$ and $\omega^k$, we obtain
  \begin{align}
    \frac{1}{2\rho_k} (\norm{\mu^k}^2 + \norm{\omega^k}^2) = & \,
    \frac{1}{2\rho_k} \left( \norm{\rho_k h(x^k)}^2 + \sum_{i=1}^r \norm{\rho_k \Pi_{\mathcal{K}_i}(-g_i(x^k))}^2 \right) \nonumber \\
    \le & \,\vartheta(x^k) \norm{\rho_k \nabla Q(x^k)}. \label{eq:bound_mult}
  \end{align}
  Computing $\nabla Q(x^k)$, we also have
  \begin{align*}
     \norm{\rho_k \nabla Q(x^k)} &  
    = \left\| \rho_k \left( Jh(x^k)^\T h(x^k) - \sum_{i=1}^r Jg_i(x^k)^\T \Pi_{\mathcal{K}_i}(-g_i(x^k)) \right) \right\| \\
    & = \norm{Jh(x^k) \mu^k - Jg(x^k)^\T \omega^k} \\
    & = \norm{\nabla_x L(x^k,\mu^k,\omega^k) - \nabla f(x^k)} \\
    & \le \norm{\nabla_x L(x^k,\mu^k,\omega^k)} + \norm{\nabla f(x^k)}.
  \end{align*}
  From Theorem~\ref{theorem:akkt}, $x^*$ satisfies the AKKT conditions, and from \cite[Lemma 3.1]{AN19}, in particular, there exists $\varepsilon_k > 0$ with $\varepsilon_k \to 0$ such that $\norm{\nabla_x L(x^k,\mu^k,\omega^k)} \le \varepsilon_k$. Thus,~\eqref{eq:bound_mult} can be written as
  \[
    \frac{1}{2\rho_k} (\norm{\mu^k}^2 + \norm{\omega^k}^2) \le \vartheta(x^k)
    (\varepsilon_k + \norm{\nabla f(x^k)}).
  \]
  Since $\norm{\nabla f(x^k)}$ is bounded, we conclude that the left-hand side of the above inequality converges to~$0$, which completes the proof.}
\end{proof}

To prove our main theorem, we consider the following assumption.
\begin{assumption}
  \label{assum:bounded}
  Let $x^* \in \R^n$ be a local minimizer of \eqref{eq:socp}.
  Then, one of the following assumptions holds:
  \begin{enumerate}[(a)]
  \item Let $\{\rho_k\} \to +\infty$, $\{x^k\} \to x^*$ and $\{\omega_i^k\}$ as in Theorem \ref{theorem:akkt}. Then, $\{ [\omega^k_i]_0 \}_{k \in \tilde N}$ 
  is bounded for all $i \in I_B(x^*)$, where
  $\tilde{N} := \{ k \mid g_i(x^k) \in \R^{m_i} \setminus
  (\mathcal{K}_i \cup -\mathcal{K}_i)$\}; or
  \item \red{The feasibility measure~$Q$ defined in~\eqref{eq:Q} satisfies the generalized Lojasiewicz inequality at~$x^*$.}
  \end{enumerate}
\end{assumption}
\noindent The above assumption will be used in Lemma~\ref{lemma:akkt2_inequality} below and discussed further in Remark~\ref{rem:lemmaobst}.
\red{Note that (a)} states that when the sequence $\{g_i(x^k)\}$ is converging 
to a point of the boundary $g_i(x^*)$ from the outside of the cone, then the corresponding first-term of the multiplier $[\omega^k_i]_0 = \rho_k [\Pi_{\mathcal{K}_i}(-g_i(x^k))]_0$ should be bounded. \red{On the other hand, the assumption (b) only involves the problem's constraints, instead of the sequence $\{ x^k \}$, and is known to be a weak condition, since it is satisfied when the constraint functions are analytic~\cite{AMS10}.}

The following lemma is the crucial part of the theorem that shows that AKKT2 is in fact a second-order necessary optimality condition. We state it as a new lemma just for clarity and because its proof is long enough.

\begin{lemma}
  \label{lemma:akkt2_inequality}
  Let $x^* \in \R^n$ be a local \red{minimizer} of~\eqref{eq:socp}.
  Let $\{x^k\} \subset \R^n$ and $\{\rho_k\} \subset \R_{+}$ be sequences such that $\{x^k\} \to x^*$ and $\{\rho_k\} \to +\infty$. Let $V_{i}^{k}$ be an arbitrary element in $\partial\Pi_{\mathcal{K}_{i}}(-g_{i}(x^k))$ and $\omega^k_i := \rho_k\Pi_{\mathcal{K}_i}(-g_i(x^k))$.
  Suppose also that Assumption~\ref{assum:bounded} is satisfied.
  Then, the inequality below holds:
  \begin{align*}
    & d^{\T}\left(\sum_{i=1}^{r} \rho_{k}Dg_{i}(x^k)^{\T}V_{i}^{k}Dg_{i}(x^k)\right)d \\
    \leq \ &
    d^{\T}\left(\sigma(x^k, \omega^k)
    +
    \sum_{i \in I_{0}(x^*)} \theta_{i}^{k} Dg_{i}(x^k)^{\T}Dg_{i}(x^k)\right. \\
    & \qquad + \left.
    \sum_{i \in I_{B}(x^*)} \gamma_{i}^{k} Dg_{i}(x^k)^{\T}\Gamma_{i}\tilde{g}_{i}(x^k)(Dg_{i}(x^k)^{\T}\Gamma_{i}\tilde{g}_{i}(x^k))^{\T}
    + \delta_{k} I_n \right)d
    \end{align*}
  for all $d \in \R^n$ \red{and for all $k$}, with appropriate parameters $\{\theta_{i}^{k}\} \subset \R$, $\{\gamma_{i}^{k}\} \subset \R$ and $\{\delta_{k}\} \subset \R_{+}$ with $\delta_k \to 0$ for sufficiently large $k$.
\end{lemma}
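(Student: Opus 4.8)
The plan is to fix $d \in \R^n$ and prove the corresponding \emph{matrix} inequality for each conic block $i$ separately, after which summing over $i$ gives the claim; writing $v := Dg_i(x^k)d \in \R^{m_i}$, it suffices to compare the quadratic form $v^\T(\rho_k V_i^k)v$ against the contributions of $\sigma_i$, the rank-one $\gamma_i^k$-term and the slack $\delta_k I_n$ on the level of the $m_i \times m_i$ matrices sandwiched between $Dg_i(x^k)^\T$ and $Dg_i(x^k)$. First I would partition $\{1,\dots,r\} = I_I(x^*) \cup I_B(x^*) \cup I_0(x^*)$ and dispose of the easy blocks. For $i \in I_I(x^*)$ we have $g_i(x^k) \in \mathrm{int}(\mathcal{K}_i)$ for large $k$, so $-g_i(x^k) \in \mathrm{int}(-\mathcal{K}_i)$ and Lemma~\ref{lemma:bsub}(a) gives $V_i^k = 0$, contributing nothing. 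For $i \in I_0(x^*)$ I would use that every element of $\partial\Pi_{\mathcal{K}_i}(\cdot)$ is symmetric with spectrum in $[0,1]$: this holds for each B-subdifferential element by Lemma~\ref{lemma:m_eigenvalue} together with the cases $0$ and $I_{m_i}$ of Lemma~\ref{lemma:bsub}, and passes to the convex hull since $\{V \mid 0 \preceq V \preceq I_{m_i}\}$ is convex; hence $V_i^k \preceq I_{m_i}$ and a congruence gives $\rho_k Dg_i(x^k)^\T V_i^k Dg_i(x^k) \preceq \rho_k Dg_i(x^k)^\T Dg_i(x^k)$, so the choice $\theta_i^k := \rho_k$ settles these terms with no slack.

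The real work is for $i \in I_B(x^*)$. Since $[g_i(x^*)]_0 = \norm{\overline{g_i(x^*)}} > 0$, for large $k$ we have $[g_i(x^k)]_0 > 0$, so $-g_i(x^k) \notin \mathcal{K}_i$ and exactly one of three situations of Lemma~\ref{lemma:bsub} occurs. If $g_i(x^k) \in \mathrm{int}(\mathcal{K}_i)$ then $V_i^k = 0$ and $\omega_i^k = 0$, so $\sigma_i = 0$ and the block is trivial. If $g_i(x^k) \in \mathrm{bd}^{+}(\mathcal{K}_i)$ then $\omega_i^k = 0$ (so $\sigma_i = 0$) and $V_i^k$ is a nonnegative multiple of $M_i(-1, w)$ with $w = -\overline{g_i(x^k)}/\norm{\overline{g_i(x^k)}}$; a direct computation from~\eqref{eq:m_i} shows $M_i(-1,w)$ is a positive multiple of the rank-one matrix $(\Gamma_i\tilde{g}_i(x^k))(\Gamma_i\tilde{g}_i(x^k))^\T$, so this block is absorbed by choosing $\gamma_i^k \ge 0$ appropriately. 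The crucial case is $g_i(x^k) \notin \mathcal{K}_i \cup -\mathcal{K}_i$, where $V_i^k = M_i(\xi_k, w)$ with $\xi_k = -[g_i(x^k)]_0/\norm{\overline{g_i(x^k)}} \in (-1,0)$ is uniquely determined, and where $\sigma_i$ has the nonzero coefficient $c_k := -[\omega_i^k]_0/[g_i(x^k)]_0$ with $[\omega_i^k]_0 = \tfrac{\rho_k}{2}(\norm{\overline{g_i(x^k)}} - [g_i(x^k)]_0)$ obtained from Lemma~\ref{lemma:projection}.

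For this case I would reduce the desired block inequality $\rho_k M_i(\xi_k,w) \preceq c_k\Gamma_i + \gamma_i^k(\Gamma_i\tilde{g}_i(x^k))(\Gamma_i\tilde{g}_i(x^k))^\T + \delta_k I_{m_i}$ to a spectral statement using the orthogonal splitting $\R^{m_i} = \mathrm{span}\{(1,0)\} \oplus \mathrm{span}\{(0,w)\} \oplus (\{0\}\times w^\perp)$. On $\{0\}\times w^\perp$ all three matrices act as scalars and the inequality reduces to a strictly positive quantity plus $\delta_k$, so it holds for free; on the remaining two-dimensional subspace everything collapses to a $2\times 2$ matrix whose off-diagonal involves $s_k := \gamma_i^k\norm{\overline{g_i(x^k)}}^2 - \rho_k/2$ and whose determinant equals $-c_k^2 + 2 s_k\delta_k + \delta_k^2$. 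This is the heart of the matter and the main obstacle: because the determinant of the $\gamma_i^k$-free, $\delta_k$-free $2\times 2$ block is $-c_k^2 < 0$, no choice of sigma-term and rank-one correction alone can make the block positive semidefinite, so the indefiniteness must be covered by the regularizer $\delta_k I_n$. I would therefore choose $\gamma_i^k$ (equivalently $s_k$) large enough that the determinant condition $2 s_k\delta_k + \delta_k^2 \ge c_k^2$ and the trace condition hold, which by Lemma~\ref{lemma:psd_matrix} (a Schur-complement factorization) yields positive semidefiniteness of the block.

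Finally I would assemble the single slack sequence $\delta_k \to 0$. Summing the blocks, the residual slack is a bounded multiple (through $\norm{Dg_i(x^k)}^2$, which converges) of the per-block slack, so it remains to drive the per-block slack to zero. Here is exactly where Assumption~\ref{assum:bounded} enters: it controls the sigma coefficient $c_k$, whose size is governed by $[\omega_i^k]_0$ together with $[g_i(x^k)]_0 \to [g_i(x^*)]_0 > 0$. Under~(a), $[\omega_i^k]_0$ is bounded on the relevant index set $\tilde{N}$, so $c_k = O(1)$; under~(b), Lemma~\ref{lem:los} gives $\norm{\omega_i^k}^2/\rho_k \to 0$, hence $[\omega_i^k]_0 = o(\sqrt{\rho_k})$ and $c_k^2/\rho_k \to 0$. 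In either case, the natural scaling $s_k \sim \rho_k$ makes the minimal admissible $\delta_k \approx c_k^2/(2 s_k)$ tend to $0$, producing an admissible choice of $\{\gamma_i^k\}$, $\{\theta_i^k\}$ and $\{\delta_k\}$ with $\delta_k \to 0$ and completing the proof.
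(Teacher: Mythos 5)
Your proposal is correct, and its overall skeleton coincides with the paper's: the same partition into $I_I(x^*)$, $I_0(x^*)$, $I_B(x^*)$, the same easy disposal of the first two index sets (with $\theta_i^k := \rho_k$), the same sub-case analysis for $i \in I_B(x^*)$ according to whether $g_i(x^k)$ lies in the interior, on $\mathrm{bd}^{+}(\mathcal{K}_i)$, or outside $\mathcal{K}_i \cup -\mathcal{K}_i$, the same parameter scalings ($\gamma_i^k \sim \rho_k/\norm{\overline{g_i(x^k)}}^2$, per-block slack $\sim \rho_k(\norm{\overline{g_i(x^k)}}-[g_i(x^k)]_0)^2$, assembled exactly as in \eqref{eq:suggested_delta}), and the same use of Assumption~\ref{assum:bounded}(a) or of Lemma~\ref{lem:los} under (b) to drive the slack to zero. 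Where you genuinely diverge is the verification of positive semidefiniteness of the hard block \eqref{eq:socp_akkt2_target_B} in the outside-the-cone case: the paper fixes the closed-form parameters \eqref{eq:suggested_varphi}--\eqref{eq:suggested_gamma} up front, expands the block into the explicit matrix \eqref{eq:with_rho}, and matches it entry-by-entry to the structured matrix of Lemma~\ref{lemma:psd_matrix}; you instead exploit that $M_i(\xi,w)$, $\Gamma_i$ and the rank-one term all preserve the orthogonal splitting $\mathrm{span}\{(1,0)\}\oplus\mathrm{span}\{(0,w)\}\oplus(\{0\}\times w^{\perp})$, reducing the block inequality to a scalar inequality on $w^\perp$ (which holds strictly, with no slack, since $[g_i(x^k)]_0 < \norm{\overline{g_i(x^k)}}$) plus a $2\times 2$ trace/determinant condition $2s_k\delta_k + \delta_k^2 \ge c_k^2$. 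Your route buys two things the paper's does not make explicit: the parameters fall out of the determinant condition rather than having to be guessed, and the computation $\det = -c_k^2 < 0$ at $\delta_k = 0$ \emph{proves} that no choice of $\gamma_i^k$ alone can work, turning the informal claim of Remark~\ref{rem:lemmaobst} (that the regularizing term seems unavoidable and must depend on $\rho_k$) into a precise statement. What the paper's route buys is fully explicit closed-form parameters and a reusable matrix lemma. Two small cosmetic points: your appeal to Lemma~\ref{lemma:psd_matrix} at the end of the $2\times 2$ step is unnecessary (trace and determinant nonnegativity already characterize positive semidefiniteness in dimension two), and in the $I_0(x^*)$ case your remark that the bound $0 \preceq V_i^k \preceq I_{m_i}$ passes from the B-subdifferential to its convex hull is a detail the paper glosses over, so it is a welcome addition rather than a deviation.
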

\begin{proof}
  Take an arbitrary $d \in \R^n$ and denote $u_i^k := Dg_i(x^k)d$ for simplicity. Then, it is sufficient to show that
\begin{align}
  \label{eq:socp_akkt2_target_2}
  \sum_{i=1}^{r} \rho_{k}(u_{i}^{k})^{\T}V_{i}^{k}u_{i}^{k} 
  & \leq
  d^{\T}\sigma(x^k, \omega^k)d
  +
  \sum_{i \in I_{0}(x^*)} \theta_{i}^{k}\norm{u_{i}^{k}}^2 \nonumber \\
  & \quad +
  \sum_{i \in I_{B}(x^*)} \gamma_{i}^{k}(u^{k}_{i})^{\T}(\Gamma_{i}\tilde{g}_{i}(x^k))(\Gamma_{i}\tilde{g}_{i}(x^k))^{\T}u^{k}_{i}
  + \delta_{k}\norm{d}^2
\end{align}
for appropriate parameters. First, from the definition of $\Gamma_{i}$, $\tilde{g}_{i}(x^k)$ and $M_{i}(\xi, w)$, we obtain
\begin{equation}
    \label{eq:gammas}
  (\Gamma_{i}\tilde{g}_{i}(x^k))(\Gamma_{i}\tilde{g}_{i}(x^k))^{\T} =
  \left\{
  \begin{array}{ll}
  0, & \mbox{if } \: \overline{g_{i}(x^k)} = 0, \\
  2\norm{\overline{g_{i}(x^k)}}^2
  M_{i}\left(-1, -\displaystyle{\frac{\overline{g_{i}(x^k)}}{\norm{\overline{g_{i}(x^k)}}}}\right), 
  & \mbox{otherwise}
  \end{array}
  \right.
\end{equation}
for all $i \in I_{B}(x^*)$. Moreover, recalling that $V_i^k \in \partial\Pi_{\mathcal{K}_{i}}(-g_{i}(x^k))$, from Lemma~\ref{lemma:bsub} (a), (c), (e), for all $i = 1, \dots, r$, we have 
\begin{equation}
    \label{eq:subgradients}
    V_{i}^{k} = \left\{
    \begin{array}{ll}
        0, 
        & \mbox{if } g_i(x^k) \in \mbox{int}(\mathcal{K}_i), \\
        \displaystyle{\alpha_{i}^{k}M_{i} \left(
        -1, -\frac{\overline{g_{i}(x^k)}}{\norm{\overline{g_{i}(x^k)}}} \right)}
        \mbox{ with } \alpha_{i}^{k} \in \left[0, 1\right],
        & \mbox{if } g_i(x^k) \in \mathrm{bd}^{+}(\mathcal{K}_{i}), \\
        \displaystyle{M_{i}\left(
        -\frac{[g_{i}(x^k)]_0}{\norm{\overline{g_{i}(x^k)}}},
        -\frac{\overline{g_{i}(x^k)}}{\norm{\overline{g_{i}(x^k)}}} \right)},
        & \mbox{if } g_i(x^k) \in \R^{m_i} \setminus (\mathcal{K}_{i} \cup -\mathcal{K}_{i}).
    \end{array}
    \right.
\end{equation}
Recall that $I_{I}(x^*), I_{0}(x^*)$ and $I_{B}(x^*)$ is a partition of $\{1, \dots, r\}$.
Here, we analyze these three cases separately. 

First, let $i \in I_{I}(x^*)$. 
Then, for sufficiently large $k$, we have $g_{i}(x^k) \in \mbox{int}(\mathcal{K}_{i})$.
Hence, from~\eqref{eq:subgradients}, for sufficiently large $k$ we have
\begin{equation}
  \label{eq:socp_akkt2_case_a}
  (u_{i}^{k})^{\T}V_{i}^{k}u_{i}^{k} = 0 \quad \mbox{for all } i \in I_{I}(x^*).
\end{equation}

Now, let $i \in I_{0}(x^*)$.
Note that the sequence $\{ g_i(x^k) \}$ may converge to $g_i(x^*)$ 
from the interior, boundary or origin of $\mathcal{K}_i$ and also from the
outside of $\mathcal{K}_i$. In any case, from Lemmas~\ref{lemma:bsub} and~\ref{lemma:m_eigenvalue}, $\lambda_{\mathrm{max}}(V_{i}^{k}) \leq 1$ holds, where $\lambda_{\mathrm{max}}(V_{i}^{k})$ 
denotes the largest eigenvalue of~$V_{i}^{k}$.
Therefore, if we define
\begin{equation}
    \label{eq:candidate_theta}
    \theta_{i}^{k} := \rho_k \quad \mbox{for all } i \in I_{0}(x^*),
\end{equation}
we obtain
\begin{equation}
  \label{eq:socp_akkt2_case_b}
  \rho_{k}(u_{i}^{k})^{\T}V_{i}^{k}u_{i}^{k} \leq
  \theta_{i}^{k}\norm{u_{i}^{k}}^2 \quad \mbox{for all } i \in I_{0}(x^*).
\end{equation}

From \eqref{eq:socp_akkt2_target_2} and the above two cases, it remains to show the inequality below:
\begin{align*}
    \sum_{i \in I_B(x^*)} \Bigg(
     - \rho_{k}(u_{i}^{k})^{\T}V_{i}^{k}u_{i}^{k}
    + \gamma_{i}^{k}(u^{k}_{i})^{\T}(\Gamma_{i}\tilde{g}_{i}(x^k))
      (\Gamma_{i}\tilde{g}_{i}(x^k))^{\T}u^{k}_{i} \Bigg) & \\
    + d^{\T}\sigma(x^k, \omega^k)d
    & + \delta_{k}\norm{d}^2 \ge 0.
\end{align*}
For all $k$, let us define
\begin{equation}
    \label{eq:suggested_delta}
    \delta_k := \sum_{i \in I_B(x^*)} \varphi_i^k \lambda_{\max} \big(Dg_i(x^k)^\T Dg_i(x^k)\big)
\end{equation}
for some $\varphi_i^k \in \R_+$, where once again
$\lambda_{\mathrm{max}}(Dg_{i}(x^k)^{\T}Dg_{i}(x^k))$ denotes the largest eigenvalue of 
$Dg_{i}(x^k)^{\T} Dg_{i}(x^k)$. 
Observe that $\lambda_{\mathrm{max}}(Dg_{i}(x^k)^{\T}Dg_{i}(x^k))\norm{d}^2 \geq \norm{u_i^k}^2$.
This fact, and the definition of sigma-term show that it is sufficient to prove that 
the following matrix is positive semidefinite:
\begin{equation}
    \label{eq:socp_akkt2_target_B}
    - \rho_{k} V_{i}^k + \gamma_{i}^{k} (\Gamma_{i}\tilde{g}_{i}(x^k))
      (\Gamma_{i}\tilde{g}_{i}(x^k))^{\T}
    - \frac{[\omega_i^k]_0}{[g_i(x^k)]_0} \Gamma_i
    + \varphi_i^k I_{m_i} \quad \mbox{for all } i \in I_{B}(x^*)
\end{equation}
for appropriate choices of $\{\gamma_i^k\} \subset \R$ and $\{\varphi_i^k\} \subset \R_+$.

Thus, from now on let $i \in I_{B}(x^*)$. 
In this case, we can define $N_1, N_2$ and $N_3$ as the partition of $\mathbb{N}$ which satisfy $\{g_i(x^k)\}_{k \in N_1} \subset \mbox{int}(\mathcal{K}_{i})$, $\{g_i(x^k)\}_{k \in N_2} \subset \mathrm{bd}^{+}(\mathcal{K}_{i})$, and $\{g_i(x^k)\}_{k \in N_3} \subset \R^{m_i} \backslash (\mathcal{K}_{i} \cup -\mathcal{K}_{i})$.
We will show that the parameters can be taken as
\begin{equation}
    \label{eq:suggested_varphi}
    \varphi_i^k := 
    \left\{
    \begin{array}{ll} 
        0, & \mbox{if } k \in N_1 \cup N_2, \\
        \rho_k \displaystyle{\frac{\big(\norm{\overline{g_i(x^k)}} - [g_i(x^k)]_0 \big)^2}{[g_i(x^k)]_0 \; \norm{\overline{g_i(x^k)}}}}, & \mbox{if } k \in N_3,
    \end{array}
    \right.
\end{equation}
\begin{equation}
    \label{eq:suggested_gamma}
    \gamma_i^k := \left\{
    \begin{array}{ll} 
        0, & \mbox{if } k \in N_1, \\
        \rho_k \displaystyle{\frac{[g_i(x^k)]_0}{\norm{\overline{g_i(x^k)}}^3}},
        & \mbox{if } k \in N_2 \cup N_3
    \end{array}
    \right.        
\end{equation}
for all $i \in I_{B}(x^*)$. 
Let us then consider the three independent cases.

\begin{enumerate}[(a)]
\item Assume that $\{g_i(x^k)\}_{k \in N_1} \subset \mathrm{int}(\mathcal{K}_{i})$.
From \eqref{eq:subgradients} we have $V_i^k = 0$ and Lemma~\ref{lemma:projection} gives $[\omega_i^k]_0 = 0$ for all $k \in N_1$. Since $\varphi_i^k = 0$ and $\gamma_i^k = 0$ respectively from~\eqref{eq:suggested_varphi} and \eqref{eq:suggested_gamma}, we conclude that~\eqref{eq:socp_akkt2_target_B} is zero.

\item Assume that $\{g_i(x^k)\}_{k \in N_2} \subset \mathrm{bd}^{+}(\mathcal{K}_{i})$.
Clearly, $\varphi_i^k = 0$ from definition and $[\omega_i^k]_0 = 0$ for all $k \in N_2$
from Lemma~\ref{lemma:projection}. Moreover, 
from~\eqref{eq:gammas}, \eqref{eq:subgradients}, we have
\begin{equation*}
  \label{eq:socp_akkt2_case_c2}
  - \rho_k V_{i}^k + \gamma_{i}^{k} (\Gamma_{i} 
  \tilde{g}_{i}(x^k))(\Gamma_{i}\tilde{g}_{i}(x^k))^{\T}
  = \big( - \rho_k \alpha_i^k + 2 \gamma_{i}^{k} \norm{\overline{g_{i}(x^k)}}^2 \big)
  M_i \left( -1,
    -\frac{\overline{g_{i}(x^k)}}{\norm{\overline{g_{i}(x^k)}}}
  \right)
\end{equation*}
for some $\alpha_i^k \in [0,1]$. Since
$[g_i(x^k)]_0 = \norm{\overline{g_{i}(x^k)}} \ne 0$ in this case, 
\eqref{eq:suggested_gamma} shows that $\gamma_{i}^{k} = \rho_k/(\norm{\overline{g_{i}(x^k)}}^2$) for all $k \in N_2$. This shows that 
$- \rho_k \alpha_i^k + 2 \gamma_{i}^{k} \norm{\overline{g_{i}(x^k)}}^2
= \rho_k(2-\alpha_i^k) > 0$.
Since 
$M_i(-1, \overline{g_{i}(x^k)}/\norm{\overline{g_{i}(x^k)}})$ has eigenvalues $0$ or $1$
from Lemma~\ref{lemma:m_eigenvalue}, the above term is positive semidefinite and the 
conclusion follows.

\item Assume that $\{g_i(x^k)\}_{k \in N_3} \subset \R^{m_i} \backslash (\mathcal{K}_{i} \cup -\mathcal{K}_{i})$. First, note that $i \in I_B(x^*)$ implies
\begin{equation}
    \label{eq:case_c_seq}
    0 < [g_i(x^k)]_0 < \norm{\overline{g_{i}(x^k)}}
\end{equation}  
for large enough~$k$.
Furthermore, from Lemma~\ref{lemma:projection}, for all $k \in N_3$
we have
\begin{equation}
    \label{eq:diff_proj}
    [\omega_i^k]_0 = [\rho_k \Pi_{\mathcal{K}_i}(-g_i(x^k))]_0 = (\rho_k
    (\norm{\overline{g_{i}(x^k)}} - [g_i(x^k)]_0))/2
\end{equation}
and thus
\[
  -\frac{[\omega_i^k]_0}{[g_i(x^k)]_0} = 
  \frac{\rho_k}{2} \left( 1 - \frac{\norm{\overline{g_{i}(x^k)}}}{[g_i(x^k)]_0} \right).
\]
For simplicity, let us now define $w_i^k := g_i(x^k) / \norm{\overline{g_{i}(x^k)}}$. 
Using the above result, as well as \eqref{eq:subgradients}, \eqref{eq:suggested_varphi}, and
\eqref{eq:suggested_gamma}, we can observe that \eqref{eq:socp_akkt2_target_B} can
be written as follows:
\begin{multline*}
    -\rho_k \displaystyle{M_i\left(-[w_i^k]_0,-\overline{w_i^k}\right)}
    + 2\rho_k [w_i^k]_0 \displaystyle{M_i\left(-1,-\overline{w_i^k}\right)} \\
    + \rho_k \displaystyle{\left( [w_i^k]_0 + \frac{1}{[w_i^k]_0} - 2 \right) I_{m_i}
    +\frac{\rho_k}{2} \left(1 -  \frac{1}{[w_i^k]_0} \right)} \Gamma_i.
\end{multline*}
Here, we note that $[w_i^k]_0 > 0$ from~\eqref{eq:case_c_seq}, so the above matrix
is well-defined. Using the definition of $M_i$ given in~\eqref{eq:m_i}, this matrix
is equivalent to
\begin{multline}
    \label{eq:with_rho}
    \frac{\rho_k}{2} \Bigg[ 
    - \left( 
        \begin{array}{cc}
            1 & -\overline{w_i^k} \\
            -\overline{w_i^k} & \big(1-[w_i^k]_0\big) I_{m_i-1} + [w_i^k]_0 \, \overline{w_i^k} \, \overline{w_i^k}^\T
        \end{array} 
    \right)
    + 2 [w_i^k]_0
    \left( 
        \begin{array}{cc}
        1 & -\overline{w_i^k} \\
        -\overline{w_i^k} & \overline{w_i^k} \, \overline{w_i^k}^\T
        \end{array} 
    \right) \\
    + 2 \displaystyle{\left( [w_i^k]_0 + \frac{1}{[w_i^k]_0} - 2 \right) I_{m_i}
    + \left( 1 - \frac{1}{[w_i^k]_0} \right)} \Gamma_i
    \Bigg].
\end{multline}
We will show that the above matrix without $\rho_k/2$ has the same structure as
\[
    P := 
    \left[
    \begin{array}{cc}
         \beta & b^\T \\
         b & C 
    \end{array}
    \right]
    \quad \mbox{where} \quad
    C := \displaystyle{\frac{1}{\beta} bb^\T} + \xi I_{m_i-1}
\]
of Lemma~\ref{lemma:psd_matrix}, with $\xi \ge 0$ and $\beta > 0$.
Here, the entries $b$ and $\beta$ are given by
\[
b := \overline{w_i^k} - 2 [w_i^k]_0 \overline{w_i^k} = (1 - 2 [w_i^k]_0) \overline{w_i^k}.
\]
and
\[
\beta := -1 + 2 [w_i^k]_0 + 2 \left( [w_i^k]_0 + \frac{1}{[w_i^k]_0} - 2 \right)
+ \left( 1 - \frac{1}{[w_i^k]_0} \right) = \frac{(1 - 2 [w_i^k]_0)^2}{[w_i^k]_0} \ge 0,
\]
where the last inequality follows because $[w_i^k]_0 > 0$
for sufficiently large $k$.
Moreover, the entry $C$ can be written as 
\begin{align*}
    C = & \, -(1-[w_i^k]_0) I_{m_i-1} - [w_i^k]_0 \overline{w_i^k} \, \overline{w_i^k}^\T 
    + 2 [w_i^k]_0 \overline{w_i^k} \, \overline{w_i^k}^\T \\
    & \, + 2 \left( [w_i^k]_0 + \frac{1}{[w_i^k]_0} - 2 \right) I_{m_i-1}
    - \left( 1 - \frac{1}{[w_i^k]_0} \right) I_{m_i-1} \\
    = & \, 3 \left( [w_i^k]_0 + \frac{1}{[w_i^k]_0} - 2 \right) I_{m_i-1}
    + [w_i^k]_0 \overline{w_i^k} \, \overline{w_i^k}^\T.
\end{align*}
Since $i \in I_B(x^*)$ implies $[g_i(x^k)]_0 / \norm{\overline{g_i(x^k)}} \to 1$, 
for sufficiently large $k$ we can see that
$[w_i^k]_0 \ne 1/2$, that is, $1 - 2 [w_i^k]_0 \ne 0$. 
Thus, using the formula of $b$, we obtain
\begin{align*}
    C = & \, 3 \left( [w_i^k]_0 + \frac{1}{[w_i^k]_0} - 2 \right) I_{m_i-1}
    + \frac{[w_i^k]_0}{(1 - 2 [w_i^k]_0)^2} b \, b^\T \\
    = & \, 3 \left(\frac{\big(\norm{\overline{g_i(x^k)}} - [g_i(x^k)]_0\big)^2}{[g_i(x^k)]_0 \,\norm{\overline{g_i(x^k)}}} \right) I_{m_i-1}
    + \frac{1}{\beta} b \, b^\T,
\end{align*}
where the second equality holds from the formula of $\beta$ and the definition of $[w_i^k]_0$.
Recalling that~\eqref{eq:case_c_seq} holds for sufficiently large $k$, the first term
of the above expression is nonnegative. We conclude from Lemma~\ref{lemma:psd_matrix} that $P$ is positive semidefinite, as well as~\eqref{eq:with_rho}.
\end{enumerate}

Summarizing the above discussion, we take the parameters $\theta_i^k$ and $\gamma_i^k$ as
\eqref{eq:candidate_theta} and \eqref{eq:suggested_gamma}, respectively.
Also, $\delta_k$ is given by \eqref{eq:suggested_delta}, where $\varphi_i^k$ is defined in~\eqref{eq:suggested_varphi}. \red{Let us first suppose that Assumption~\ref{assum:bounded}(a) holds.} In particular, recalling~\eqref{eq:diff_proj}, we know that
$\{ \rho_k \big(\norm{\overline{g_i(x^k)}} - [g_i(x^k)]_0 \big) \}_{k \in N_3}$ is bounded by this assumption (note that $\tilde{N} = N_3$). Since $i \in I_B(x^*)$ implies $\norm{\overline{g_i(x^k)}} - [g_i(x^k)]_0 \to 0$ and~\eqref{eq:case_c_seq} holds
when $k \in N_3$, we obtain $\varphi_i^k > 0$ and 
$\varphi_i^k \to 0$ if $k \in N_3$. \red{Now, suppose that Assumption~\ref{assum:bounded}(b) holds. In this case, note that recalling~\eqref{eq:diff_proj} yields $\varphi_i^k = (2 [\omega_i^k]_0)^2 / (\rho_k [g_i(x^k)]_0 \norm{\overline{g_i(x^k)}})$ when $k \in N_3$. Thus, once again  $\varphi_i^k > 0$ and $\varphi_i^k \to 0$ holds because of Lemma~\ref{lem:los} and~\eqref{eq:case_c_seq}.} 
Thus, we conclude that 
$\delta_k \in \R_+$ and $\delta_k \to 0$ in this case.
Therefore, with such choices of parameters, the inequality~\eqref{eq:socp_akkt2_target_2}
holds, which completes the proof.
\end{proof}

\begin{remark}
    \label{rem:lemmaobst}
    \red{An ideal sequential optimality does not require additional hypotheses, but here we needed Assumption~\ref{assum:bounded}.}
    The main obstacle of the above proof is when $i \in I_B(x^*)$ and the sequence converges from outside of the cone (case (c)). As we already stated in the introduction, in \cite[Theorem~7]{EL20}, the proof is simpler because WCR is assumed, and thus the equality $\overline{g_i(x^k)}^\T \overline{u_i^k} = \norm{\overline{g_i(x^k)}} [u^k_i]_0$ holds. In this case, a simpler formula for~$\gamma_i^k$ would be enough to prove the theorem, and we also would not need to think in the matrix structure of Lemma~\ref{lemma:psd_matrix}. 
    Also in \cite[Theorem~7]{EL20}, Robinson CQ, which implies the boundedness of the sequence of multipliers $\{\omega_i^k\}$, is assumed.
    Here, we replaced this condition with Assumption~\ref{assum:bounded}, \red{where both (a) and (b) are} less restrictive. In fact, based on many calculations, we believe the term $\varphi_i^k$ with
    $i \in I_B(x^*)$ and $k \in N_3$ should depend on~$\rho_k$. Since $\rho_k \to \infty$, it appears to be difficult to guarantee that $\varphi_i^k \to 0$ without any assumption. 
    
    \red{At first, assumption (b) seems more reasonable than (a) because it only involves the problem's data. Also, from Lemma~\ref{lem:los}, we cannot guarantee the boundedness of the multipliers. In fact, the limit in Lemma~\ref{lem:los} allows the multipliers $\omega^k_i$ and $\mu^k_i$ to be not bounded. Since (a) requires the first-term of the multiplier $\omega^k_i$ to be bounded when the sequence converges from outside of the cone ($k \in \tilde{N}$), (b) is weaker than (a) in this sense. However, Lemma~\ref{lem:los} also says that (b) imposes a condition on the multiplier $\mu_i^k$, associated to the equality constraints, while (a) does not. For this reason, assumption (b) is not necessarily weaker than (a). Moreover, whether assumption~(a) is strong seems to depend on the specific implementation of the algorithms. In particular, an algorithm that generates only feasible points would have $\tilde{N} = \emptyset$, and thus (a) would not need to be considered.}
\end{remark}

%==========================================================================================

\subsection{Validity of the new optimality condition}
%% (1) local optimal satisfies AKKT2
The following theorem shows that AKKT2 is in fact a second-order optimality condition 
that does not require any constraint qualification.
\begin{theorem}
  \label{theorem:akkt2}
  Let $x^*$ be a local \red{minimizer} of \eqref{eq:socp} \red{and suppose that Assumption~\ref{assum:bounded} holds.} Then, $x^*$ satisfies AKKT2.
\end{theorem}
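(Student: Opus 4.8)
The plan is to combine the first-order theory (Theorem~\ref{theorem:akkt}) with the second-order information extracted from the generalized Hessian (Theorem~\ref{theorem:generalized_hessian}), and then convert that information into the AKKT2 inequality via the key Lemma~\ref{lemma:akkt2_inequality}. First I would invoke Theorem~\ref{theorem:akkt}: for a local minimizer $x^*$ and a fixed sequence $\{\rho_k\} \to +\infty$, it produces a sequence $\{x^k\} \to x^*$ of local minimizers of the penalty function $F_k$ in~\eqref{eq:penalty_function}, together with multipliers $\mu^k := \rho_k h(x^k)$ and $\omega^k_i := \rho_k \Pi_{\mathcal{K}_i}(-g_i(x^k))$ satisfying the AKKT conditions. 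This already secures the first-order half of Definition~\ref{def:akkt2}; it remains only to produce the matrices $\{\eta^k\}, \{\theta_i^k\}, \{\gamma_i^k\}$ and the scalar sequence $\{\delta^k\} \to 0$ realizing the second-order inequality.

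Next I would extract second-order information from the fact that each $x^k$ is an \emph{unconstrained} local minimizer of $F_k$. Since $F_k$ is differentiable with locally Lipschitz gradient (the projection $\Pi_{\mathcal{K}_i}$ is Lipschitz and the data are twice continuously differentiable), Theorem~\ref{theorem:generalized_hessian} applies: for each $d \in \R^n$ there exists $M \in \partial^2 F_k(x^k)$ with $d^{\T} M d \geq 0$. The plan is to compute $\partial^2 F_k(x^k)$ explicitly. Differentiating $F_k$ once gives a term $\grad f(x^k) + \norm{x^k-x^*}^2(x^k-x^*) - \rho_k \sum_i Dg_i(x^k)^{\T}\Pi_{\mathcal{K}_i}(-g_i(x^k)) + \rho_k Dh(x^k)^{\T}h(x^k)$; differentiating again and using Theorem~\ref{theorem:convolution} to handle the generalized Jacobian of each $\Pi_{\mathcal{K}_i}(-g_i(\cdot))$, the generalized Hessian is contained in the set of matrices of the form
\begin{align*}
  \grad^2 f(x^k)
  &+ \sum_{i=1}^{r}\rho_k Dg_i(x^k)^{\T} V_i^k Dg_i(x^k)
  - \sum_{i=1}^{r} \inner{\omega^k_i}{\cdot}\text{-type curvature terms} \\
  &+ \rho_k Dh(x^k)^{\T}Dh(x^k) + \rho_k \sum_{i=1}^p h_i(x^k)\grad^2 h_i(x^k)
  + \big(\text{order }\norm{x^k-x^*}^2\big) I_n,
\end{align*}
where $V_i^k \in \partial \Pi_{\mathcal{K}_i}(-g_i(x^k))$, and where the curvature terms coming from the second derivatives of $g_i$ combine with $\grad^2 f$ and the $h$-Hessians to reproduce exactly $\grad^2_x L(x^k,\mu^k,\omega^k)$. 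The $\rho_k Dh(x^k)^{\T}Dh(x^k)$ term is handled by setting $\eta^k_i := \rho_k$, and the small quartic-penalty contribution is absorbed into $\delta_k I_n$ since it is $O(\norm{x^k-x^*}^2) \to 0$.

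The decisive step is then to feed the $\rho_k \sum_i Dg_i(x^k)^{\T} V_i^k Dg_i(x^k)$ term into Lemma~\ref{lemma:akkt2_inequality}. That lemma — which is exactly where Assumption~\ref{assum:bounded} is consumed — bounds this term above by $\sigma(x^k,\omega^k)$ plus the $\theta_i^k$-terms over $I_0(x^*)$, the $\gamma_i^k$-terms over $I_B(x^*)$, and a $\delta_k I_n$ contribution with $\delta_k \to 0$. Substituting this bound into $d^{\T} M d \geq 0$ yields precisely the inequality in Definition~\ref{def:akkt2}, with the parameters assembled from the lemma and from the $\eta^k,\delta_k$ choices above. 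I expect the main obstacle to be the careful bookkeeping in computing $\partial^2 F_k(x^k)$: one must verify via Theorem~\ref{theorem:convolution} that the generalized Hessian of the composite penalty is contained in the claimed set, correctly identify which curvature terms reconstitute $\grad^2_x L$, and confirm that all residual terms are either exactly the $\eta^k$-term or vanish into $\delta_k I_n$. Once the generalized Hessian is pinned down, the remaining work is a direct application of the two cited theorems and the key lemma, so the conclusion follows.
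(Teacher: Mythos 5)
Your proposal is correct and follows essentially the same route as the paper's own proof: invoke Theorem~\ref{theorem:akkt} for the penalized sequence and multipliers, apply Theorem~\ref{theorem:generalized_hessian} to each unconstrained minimizer $x^k$ of $F_k$, use Theorem~\ref{theorem:convolution} to express the generalized Hessian via elements $V_i^k \in \partial\Pi_{\mathcal{K}_i}(-g_i(x^k))$ so that the curvature terms reassemble into $\grad^2_x L(x^k,\mu^k,\omega^k)$, set $\eta_i^k := \rho_k$, bound the $\rho_k \sum_i Dg_i(x^k)^{\T}V_i^k Dg_i(x^k)$ term by Lemma~\ref{lemma:akkt2_inequality} (where Assumption~\ref{assum:bounded} enters), and absorb the quartic-penalty contribution $\norm{x^k-x^*}^2 I_n + 2(x^k-x^*)(x^k-x^*)^{\T}$ into $\delta_k I_n$. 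The paper does this last absorption explicitly by taking $\delta_k := \tilde{\delta}_k + 3\norm{x^k-x^*}^2$ and a Cauchy--Schwarz estimate, which is exactly the bookkeeping your sketch anticipates.
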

\noindent
\begin{proof} First, we show that the AKKT conditions hold at $x^*$.
Let $\{\rho_k\}$ be a sequence such that $\rho_k \to \infty$.
From Theorem \ref{theorem:akkt}, there exists $\{x^k\}$, where $x^k$ is a local minimizer of $F_k$ such that $x^k \to x^*$. As in the theorem, let $\mu^k := \rho_k h(x^k)$ and $\omega^k_i := \rho_k\Pi_{\mathcal{K}_i}(-g_i(x^k))$ for all~$k$. Then, $(x^k, \mu^k, \omega^k)$ \red{satisfies} the AKKT conditions.

Next, we show that $x^*$ satisfies the second-order condition.
The definition of $F_k$ in~\eqref{eq:penalty_function} 
and the local optimality of $x^k$ give
\begin{multline*}
  \grad F_k(x^k) :=
  \grad f(x^k) +
  \norm{x^k - x^*}^2 (x^k - x^*) -
  \rho_k\sum_{i=1}^{r}Dg_i(x^k)^{\T}\Pi_{\mathcal{K}_i}(-g_i(x^k)) \\ +
  \rho_k \grad h(x^k)^{\T}h(x^k) = 0.
\end{multline*}
Moreover, from Theorem \ref{theorem:generalized_hessian}, there exists $\chi_i^k \in \partial(\Pi_{\mathcal{K}_{i}} \circ (-g_i))(x^k)$ such that $d^{\T} \grad^2 F_k(x^k)d \geq 0$ where $\grad^2 F_k(x^k)$ is an element of the generalized Hessian of $F_k$ at $x_k$. Namely,
\begin{align*}
  \grad^2 F_k(x^k) & :=
  \grad_{x}^{2} L(x^k, \mu^k, \omega^k) +
  \norm{x^k - x^*}^{2} I_{n} +
  2(x^k-x^*)(x^k-x^*)^{\T} \\
  & \ +
  \rho_{k} \sum_{i=1}^{p}\grad h_{i}(x^k)\grad h_{i}(x^k)^{\T} -
  \rho_{k} \sum_{i=1}^{r} Dg_{i}(x^k)^{\T} \chi_i^k.
\end{align*}
From Theorem \ref{theorem:convolution}, there exists $V_{i}^{k} \in \partial\Pi_{\mathcal{K}_{i}}(-g_{i}(x^k))$ such that $\chi_i^k = V_{i}^{k} \circ -Dg_{i}(x^k) = - V_{i}^{k}Dg_{i}(x^k)$ which \red{satisfies} the following inequality for each $d \in \R^n$:
\begin{align*}
  d^{\T} \left(
      \grad_{x}^{2} L(x^k, \mu^k, \omega^k)
    + \rho_{k} \sum_{i=1}^{p}\grad h_{i}(x^k)\grad h_{i}(x^k)^{\T} \right. \\
    \left. + \rho_{k} \sum_{i=1}^{r} Dg_{i}(x^k)^{\T}V_{i}^{k}Dg_{i}(x^k)
    + \Omega_k
  \right)d
  \geq
  0,
\end{align*}
where $\Omega_k := \norm{x^k - x^*}^{2}I_n + 2(x^k-x^*)(x^k-x^*)^{\T} \to \red{0}$. Defining $\eta^k_i := \rho_k$ for all $i$, from Lemma~\ref{lemma:akkt2_inequality}, we obtain
\begin{multline*}
    d^{\T}\left(
    \grad^{2}_{x}L(x^k, \mu^k, \omega^k)
    + \sigma(x^k, \omega^k)
    + \sum_{i=1}^{p}\eta^{k}_{i}\grad h_i(x^k)\grad h_i(x^k)^{\T} \right. \\
    + \sum_{i \in I_{B}(x^*)}
    \gamma_i^k
    (Dg_{i}(x^k)^{\T}\Gamma_{i}\tilde{g}_{i}(x^k))
    (Dg_{i}(x^k)^{\T}\Gamma_{i}\tilde{g}_{i}(x^k))^{\T} \\
    + \sum_{i \in I_{0}(x^*)} \theta_{i}^{k}Dg_{i}(x^k)^{\T}Dg_{i}(x^k)    
    + \tilde{\delta}_{k} I_{n} + \Omega_k \Bigg)d \geq 0,
\end{multline*}
for all $d \in \R^n$ with appropriate parameters $\{\theta_{i}^{k}\} \subset \R, \{\gamma_{i}^{k}\} \subset \R$ and $\{\tilde{\delta}_{k}\} \subset \R_{+}$ with $\tilde{\delta}_k \to 0$ for sufficiently large $k$. Taking 
$\delta_k := \tilde{\delta}_k + 3 \norm{x^k - x^*}^2$ for all $k$, we observe that
$\{\delta_k\} \subset \R_+$, $\delta_k \to 0$ and
\[
  \delta_k \norm{d}^2
  \ge \tilde{\delta}_k \norm{d}^2 + \norm{x^k - x^*}^2 \norm{d}^2 +
  2 (d^\T (x^k - x^*))^2 
  =  d^\T (\tilde{\delta}_k I_n + \Omega_k) d,
\]
where the inequality holds from Cauchy–Schwarz inequality. This, together with the previous inequality yields the second-order condition. 
Therefore, $x^*$~is an AKKT2 point.
\end{proof}

\red{From Theorem~\ref{theorem:akkt}, the same result above can be applied by replacing AKKT2 with CAKKT2.}

\subsection{Strength of the new optimality condition}
%% (2) AKKT2 + Robinson CQ + WCR -> WSONC
In this section, we show the relationship between AKKT2 and WSONC. The following result shows that AKKT2 is a strong optimality condition in the sense that it implies WSONC under a constraint qualification. This extends the result for NLP~\cite[Proposition 3.5]{AN17}.
\begin{theorem}
  If $x^*$ is an AKKT2 point of \eqref{eq:socp} and fulfills Robinson CQ and WCR, then $x^*$ satisfies WSONC with some multipliers $\mu^*$ and $\omega^*$.
\end{theorem}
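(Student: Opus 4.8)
The plan is to produce the required KKT triple from the AKKT part of the hypothesis and then pass to the limit in the AKKT2 inequality along directions supplied by WCR. First, because $x^*$ is AKKT2 it is in particular AKKT, so there are sequences $\{x^k\} \to x^*$, $\{\mu^k\}$ and $\{\omega^k\} \subset \mathcal{K}$ satisfying the AKKT conditions together with the second-order inequality of Definition~\ref{def:akkt2}. Under Robinson CQ the multiplier sequences $\{\mu^k\}$ and $\{\omega^k\}$ are bounded, so after passing to a subsequence I may assume $\mu^k \to \mu^*$ and $\omega^k \to \omega^*$; by the AKKT-implies-KKT result under Robinson CQ (see~\cite{AN19}) the limit $(x^*, \mu^*, \omega^*)$ is a KKT triple. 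These $\mu^*$ and $\omega^*$ will be the multipliers claimed in the statement.

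Next, fix an arbitrary $d \in S(x^*)$. Since WCR holds, Lemma~\ref{lemma:inner_semicontinuity} guarantees that the mapping $x \mapsto S(x,x^*)$ is inner semicontinuous at $x^*$, hence (possibly along a further subsequence, which does not disturb the convergence of the multipliers) there is a sequence $\{d^k\}$ with $d^k \in S(x^k,x^*)$ and $d^k \to d$. The point of working with $d^k \in S(x^k,x^*)$ is that $Dh(x^k)d^k = 0$, that $Dg_i(x^k)d^k = 0$ for $i \in I_0(x^*)$, and that $\tilde{g}_i(x^k)^{\T}\Gamma_i Dg_i(x^k)d^k = 0$ for $i \in I_B(x^*)$. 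Substituting $d = d^k$ into the AKKT2 inequality, these three identities make the $\eta^k$-term, the $\theta_i^k$-term, and the $\gamma_i^k$-term vanish identically, leaving
\[
  (d^k)^{\T}\big(\grad^2_x L(x^k, \mu^k, \omega^k) + \sigma(x^k, \omega^k)\big)d^k + \delta_k \norm{d^k}^2 \geq 0
\]
for all large $k$.

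Finally I would let $k \to \infty$. Since $f$, $g$ and $h$ are twice continuously differentiable, $\grad^2_x L(x^k,\mu^k,\omega^k) \to \grad^2_x L(x^*,\mu^*,\omega^*)$; and for $i \in I_B(x^*)$ one has $[g_i(x^*)]_0 = \norm{\overline{g_i(x^*)}} > 0$, so each ratio $[\omega_i^k]_0/[g_i(x^k)]_0$ appearing in the sigma-term converges and $\sigma(x^k,\omega^k) \to \sigma(x^*,\omega^*)$. Because $\delta_k \to 0$ and $\{d^k\}$ is bounded (as $d^k \to d$), the residual $\delta_k\norm{d^k}^2 \to 0$. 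Passing to the limit therefore yields $d^{\T}(\grad^2_x L(x^*,\mu^*,\omega^*) + \sigma(x^*,\omega^*))d \ge 0$, and since $d \in S(x^*)$ was arbitrary this is exactly WSONC for the triple $(x^*,\mu^*,\omega^*)$.

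The main obstacle I anticipate is the bookkeeping around the limit rather than any single hard estimate: I must ensure the multipliers actually converge (this is where Robinson CQ, through boundedness of $\{(\mu^k,\omega^k)\}$, is essential) and that the one limiting pair $(\mu^*,\omega^*)$ serves every $d \in S(x^*)$ simultaneously, even though the WCR-generated directions $d^k$ and the chosen subsequence may depend on $d$. The other delicate point is the continuity of the sigma-term, which hinges on the denominators $[g_i(x^k)]_0$ staying bounded away from zero for $i \in I_B(x^*)$; this is exactly guaranteed by $g_i(x^*) \in \mathrm{bd}^{+}(\mathcal{K}_i)$. The vanishing of the extra second-order terms, by contrast, is immediate once the directions are chosen in $S(x^k,x^*)$, which is the whole purpose of invoking WCR.
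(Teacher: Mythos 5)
Your proof is correct and takes essentially the same route as the paper's: obtain a convergent multiplier subsequence from the AKKT-implies-KKT result of~\cite{AN19} under Robinson CQ, invoke WCR through Lemma~\ref{lemma:inner_semicontinuity} to produce directions $d^k \in S(x^k,x^*)$ that annihilate the $\eta$-, $\theta$- and $\gamma$-terms in the AKKT2 inequality, and pass to the limit to recover WSONC. Your extra care about the $d$-dependent sub-subsequence and the continuity of the sigma-term only makes explicit what the paper leaves implicit.
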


\begin{proof}
Since $x^*$ fulfills AKKT2, there exist sequences $\R^n \supset \{x^k\} \to x^*$, $\{\mu^k\} \subset \R^p$, $\{\omega^k\} \subset \mathcal{K}$, $\{\eta^k\} \subset \R^p$, $\{\theta_{i}^k\}_{i \in I_{0}(x^*)} \subset \R$, $\{\gamma_{i}\}_{i \in I_{B}(x^*)} \subset \R$, and $\R_{+} \supset \{\delta^k\} \to 0$ such that
\begin{equation*}
  \grad_x L(x^k, \mu^k, \omega^k) \to 0, 
\end{equation*}
and
\begin{align*}
    d^{\T}\Bigg(
    \grad^{2}_{x}L(x^k, \mu^k, \omega^k)
    + \sigma(x^k, \omega^k)
    + \sum_{i=1}^{p}\eta^{k}_{i}\grad h_i(x^k)\grad h_i(x^k)^{\T} \\
    + \sum_{i \in I_{B}(x^*)}
    \gamma_i
    (Dg_{i}(x^k)^{\T}\Gamma_{i}\tilde{g}_{i}(x^k))
    (Dg_{i}(x^k)^{\T}\Gamma_{i}\tilde{g}_{i}(x^k))^{\T}\\
    + \sum_{i \in I_{0}(x^*)} \theta_{i}^{k}Dg_{i}(x^k)^{\T}Dg_{i}(x^k) 
    \Bigg)d \geq -\delta_{k}\norm{d}^2
\end{align*}
for all $d \in \R^n$ and sufficiently large $k$.
By~\cite[Theorem 3.3]{AN19}, there exist subsequences of $\{\mu^k\} \to \mu^*$ and $\{\omega^k\} \to \omega^*$ where $(x^*, \mu^*, \omega^*)$ is a KKT point under Robinson CQ. We consider these subsequences below.
Moreover, by Lemma~\ref{lemma:inner_semicontinuity}, \red{for each $d \in S(x^*)$}, we can take a sequence $\{d^k\} \subset S(x^k, x^*)$ which fulfills $d^k \to d$ where
\begin{equation*}
  S(x^k, x^*)
  :=
  \left\{
    d \in \R^n \middle|
    \begin{array}{l}
      Dh(x^k)d = 0; \: Dg_{i}(x^k)d = 0, i \in I_{0}(x^*); \\
      \tilde{g}_{i}(x^k)^{\T}\Gamma_{i}Dg_{i}(x^k)d = 0, i \in I_{B}(x^*)
    \end{array}
  \right\}.
\end{equation*}
From the definition of $S(x^k, x^*)$, we obtain
\begin{equation*}
  (d^k)^{\T}
  (
    \grad^{2}_{x}L(x^k, \mu^k, \omega^k)
    + \sigma(x^k, \omega^k)
  )
  d^k
  \geq
  -\delta_k\norm{d^k}^2,
\end{equation*}
for sufficiently large $k$. Taking the limit $k \to \infty$ in the above inequality, we have 
\[
d^{\T}(\grad^{2}_{x}L(x^*, \mu^*, \omega^*) + \sigma(x^*, \omega^*))d \geq 0,
\]
which means WSONC.
\end{proof}
\red{As we stated before, the above result holds when AKKT2 is replaced by CAKKT2.}

\subsection{Numerical example}

By the definition of the AKKT2 conditions, it is obvious that AKKT2 implies AKKT. We now give an example that shows that AKKT2 is in fact stronger than AKKT. The example is similar to~\cite[Example 3.1]{AN17}. Consider the \eqref{eq:socp} without equality constraints where $n := 2$, $r := 1$, $m_1 := 2$, $f(x) := -x_1 - x_2$ and $g(x) := (1, x_1^2 x_2^2)^{\T}$. Let $x^* := (1, 1)^{\T}$. Since the Lagrangian of the problem is defined by
\begin{equation*}
  \grad_x L(x, \omega) =
  \begin{pmatrix}
    -1 \\
    -1
  \end{pmatrix}
  - 2 x_1 x_2
  \begin{pmatrix}
    0 & x_2 \\
    0 & x_1
  \end{pmatrix}
  \omega,
\end{equation*}
$x^*$ is an AKKT (and KKT) point with Lagrange multiplier $\omega^* = (1/2, -1/2)^{\T}$. Now, we will show that the AKKT2 conditions fail at $x^*$. Suppose that AKKT2 holds and let $x^k := (x_1^k, x_2^k)^{\T} \to x^*$ and $d^k := (x_1^k, -x_2^k)^{\T}$. Then, from the definition of AKKT2, we obtain
\begin{align*}
  (d^k)^{\T}\grad_x^2 L(x^k, \omega^k)d^k + \delta_k \norm{d^k}^2 & =
  4 \omega_2^k (x_1^k x_2^k)^2 + \delta_k ((x_1^k)^2 + (x_2^k)^2) \geq 0.
\end{align*}
However, since $x^k \to x^*$, $\omega^k \to \omega^*$ and $\delta_k \to 0$ hold, we have
\[
4 \omega_2^k (x_1^k x_2^k)^2 + \delta_k ((x_1^k)^2 + (x_2^k)^2) \to -2,
\]
which contradicts the result. Consequently, $x^*$ is an AKKT point but not an AKKT2 point.

\section{Algorithms that generate AKKT2 sequences}
\label{sec:algorithms}

%% (3) Practical methods converge to AKKT2
As stated previously, having a formal definition of AKKT2 is important because now
we can propose and analyze algorithms. Thus, in this section, we will consider algorithms that generate AKKT2 sequences. We consider two algorithms: \red{an} augmented Lagrangian method and \red{a} sequential quadratic programming (SQP) method.

\subsection{The augmented Lagrangian method}
Our approach is based on an existing augmented Lagrangian method for SOCP (see for instance~\cite{LI07}) and for nonlinear programming, that is proposed in~\cite[Algorithm~4.1]{AN07} and~\cite[Algorithm~1]{AN17}. Note that in~\cite{AN17} the method is shown to generate AKKT2 sequences.
The augmented Lagrangian function used here is defined as
\begin{equation*}
  L_{\rho}(x, \mu, \omega) :=
  f(x) +
  \frac{1}{2\rho}\left\{
    \norm{\mu - \rho h(x)}^2 +
    \sum_{i=1}^{r} \left(
      \norm{\Pi_{\mathcal{K}_i}(\omega_{i} - \rho g_{i}(x))}^2 - \norm{\omega_{i}}^2
    \right)
  \right\},
\end{equation*}
where $\rho > 0$ is the penalty parameter. The gradient of $L_{\rho}$ with respect to $x$ is given by
\begin{equation*}
  \grad_{x} L_{\rho}(x, \mu, \omega) =
  \grad f(x) -
  Dh(x)^{\T} (\mu - \rho h(x)) - 
  \sum_{i=1}^{r} Dg_{i}(x)^{\T} \Pi_{\mathcal{K}_i}(\omega_{i} - \rho g_{i}(x)).
\end{equation*}
To handle second-order information, we need to obtain the second-order derivative of~$L_{\rho}$. However, as it can be seen above, its derivative is not differentiable because of the projection term. To overcome such difficulty, we consider the following.

As an alternative for the second-order derivative of~$L_{\rho}$, we define an operator $\bar{\grad}^{2}_{x}$ as follows:
\begin{align*}
  \bar{\grad}^{2}_{x} L_{\rho}(x, \mu, \omega)  & :=
  \grad_{x}^{2} L(x, \mu - \rho h(x), \Pi_{\mathcal{K}}(\omega - \rho g(x))) \\
  & + \rho \sum_{i=1}^{p}\grad h_{i}(x)\grad h_{i}(x)^{\T}
  + \rho \sum_{i=1}^{r} Dg_{i}(x)^{\T}V_{i}Dg_{i}(x),
\end{align*}
where
\begin{equation*}
  V_{i} =
  \left\{
    \begin{array}{ll}
    I_{m_i}, &
    \mathrm{if\ } -g_{i}(x) \in \mathrm{bd}(\mathcal{K}_{i}) \cup \mathrm{int}(\mathcal{K}_{i}), \\
    0, &
    \mathrm{if\ } -g_{i}(x) \in \mathrm{int}(-\mathcal{K}_{i}), \\
    \displaystyle{M_{i}\left(
      -\frac{[g_{i}(x)]_{0}}{\norm{\overline{g_{i}(x)}}},
      -\frac{\overline{g_{i}(x)}}{\norm{\overline{g_{i}(x)}}}
    \right),} &
    \mathrm{otherwise,}
    \end{array}
  \right.
\end{equation*}
recalling that $M_i$ is defined in~\eqref{eq:m_i}. From Lemma~\ref{lemma:bsub}, note that $V_i \in \partial \Pi_{\mathcal{K}_i}(-g_i(x))$. The augmented Lagrangian method that uses the above defined $\bar\nabla_x^2$ is given in Algorithm~\ref{alg:socp_akkt2}. \red{This is similar to the NLP case~\cite{AN17}, where Step~2 corresponds to the subproblem, Step~3 defines the penalty parameter $\rho_k$ and Steps~4 and~5 updates the multipliers $\mu^k$ and $\omega^k$, using safeguarded multipliers $\hat{\mu}^k$ and $\hat{\omega}^k$. For practical details, see~\cite{AHMRS22,BM14}.}

\begin{algorithm}[htb]
  \caption{Augmented Lagrangian method that generates AKKT2 sequences}
  \label{alg:socp_akkt2}
  \begin{algorithmic}[1]
  \State Let $\gamma > 1, \rho_{1} > 0$ and $\tau \in (0, 1)$. Define a sequence $\{\epsilon_k\} \subset \R_{+}$ such that $\epsilon_k \to 0$. Set an initial point $x_0 \in \R^n, \hat{\mu}_i^1 \in \R$ for all $i \in \{1, \dots, p\}$ and $\hat{\omega}_j^1$ such that $\hat{\omega}_j^1 \in \mathcal{K}_j$ for all $j \in \{1, \dots, r\}$. Initialize $k := 1$ and $\norm{v^0} := +\infty$.
  \State Find an appropriate minimizer $x^k$ of $L_{\rho_k}(x, \hat{\mu}^k, \hat{\omega}^k)$, which satisfies
  \begin{equation}
    \label{eq:algorithm_target}
    \norm{\grad_x L_{\rho_k}(x^k, \hat{\mu}^k, \hat{\omega}^k)} \leq \epsilon_k \quad \mathrm{and} \quad 
    \bar{\grad}^2_x L_{\rho_k}(x^k, \hat{\mu}^k, \hat{\omega}^k) + \red{\epsilon_k I_n \succeq 0}.
  \end{equation}
  \State Define $v^k = (v_1^k, \dots, v_r^k)$ with
  \begin{equation*}
    v_i^k := \Pi_{\mathcal{K}_i}\left(\frac{\hat{\omega}_i^k}{\rho_k} - g_i(x^k)\right) - \frac{\hat{\omega}_i^k}{\rho_k},
  \end{equation*}
  for all $i \in \{1, \dots, r\}$.
  If $\mathrm{max}\{\norm{h(x^k)}_{\infty}, \norm{v^k}_{\infty}\} \leq \tau \mathrm{max}\{\norm{h(x^{k-1})}_{\infty}, \norm{v^{k-1}}_{\infty}\}$ holds, set $\rho_{k+1} = \rho_k$, otherwise, set $\rho_{k+1} = \gamma \rho_k$.
  \State Compute $\mu^k := \hat{\mu}^k - \rho_k h(x^k)$ and $\omega_i^k := \Pi_{\mathcal{K}_i}(\hat{\omega}_i^k - \rho_k g_i(x^k))$ for all $i \in \{1, \dots, r\}$.
  \State Define $\hat{\mu}_i^{k+1}$ and $\hat{\omega}_j^{k+1}$ such that $\hat{\omega}_j^{k+1} \in \mathcal{K}_j$ and they are \red{uniformly} bounded for all $i \in \{1, \dots, p\}$ and $j \in \{1, \dots, r\}$.
  \State Set $k := k+1$ and go to Step 2.
  \end{algorithmic}
\end{algorithm}

\begin{theorem}
  \red{Let $\{x^k\}$ be generated by Algorithm \ref{alg:socp_akkt2}. Then, any accumulation point of $\{ x^k \}$ satisfies the AKKT2 conditions.} 
\end{theorem}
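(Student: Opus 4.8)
The plan is to fix an arbitrary accumulation point $x^*$ together with a subsequence $\{x^k\}_{k \in K} \to x^*$ and to verify along it the three ingredients of Definition~\ref{def:akkt2}: feasibility of $x^*$, the first-order AKKT conditions, and the second-order inequality. I would split the analysis according to whether the penalty parameters $\{\rho_k\}$ remain bounded on $K$ or $\rho_k \to +\infty$, since Lemma~\ref{lemma:akkt2_inequality} is available only in the latter regime. The subsequence converging to $x^*$, together with the multipliers from Step~4, will serve as the AKKT2 sequence.

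Feasibility and the first-order part follow essentially from the standard convergence theory of the augmented Lagrangian method for SOCP. If $\{\rho_k\}$ is bounded, Step~3 keeps $\rho_k$ eventually constant, which forces $\max\{\norm{h(x^k)}_\infty,\norm{v^k}_\infty\}\to 0$, hence $h(x^*)=0$ and $g_i(x^*)\in\mathcal{K}_i$; if $\rho_k\to+\infty$, dividing the approximate-stationarity relation $\norm{\grad_x L_{\rho_k}(x^k,\hat{\mu}^k,\hat{\omega}^k)}\le \epsilon_k$ by $\rho_k$ and using the uniform boundedness of the safeguarded multipliers yields stationarity of the feasibility measure. For the first-order AKKT conditions the key identity is that, with $\mu^k$ and $\omega^k$ chosen as in Step~4, the augmented-Lagrangian gradient coincides with the ordinary Lagrangian gradient, $\grad_x L_{\rho_k}(x^k,\hat{\mu}^k,\hat{\omega}^k)=\grad_x L(x^k,\mu^k,\omega^k)$, so that $\norm{\grad_x L(x^k,\mu^k,\omega^k)}\le\epsilon_k\to 0$; the membership $\omega_i^k\in\mathcal{K}_i$ and the sign/complementarity requirements over the partition $I_I,I_B,I_0$ follow from the projection formula of Lemma~\ref{lemma:projection}, exactly as in~\cite{AN19}.

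The second-order condition is the genuinely new part. Substituting $\mu^k$ and $\omega^k$ into the definition of $\bar{\grad}^2_x L_{\rho_k}$ gives
\[
  \bar{\grad}^2_x L_{\rho_k}(x^k,\hat{\mu}^k,\hat{\omega}^k)
  = \grad^2_x L(x^k,\mu^k,\omega^k)
  + \rho_k\sum_{i=1}^p \grad h_i(x^k)\grad h_i(x^k)^{\T}
  + \rho_k\sum_{i=1}^r Dg_i(x^k)^{\T} V_i^k Dg_i(x^k),
\]
with $V_i^k\in\partial\Pi_{\mathcal{K}_i}(-g_i(x^k))$, so the algorithmic requirement $\bar{\grad}^2_x L_{\rho_k}(x^k,\hat{\mu}^k,\hat{\omega}^k)+\epsilon_k I_n\succeq 0$ reads $d^{\T}(\cdots)d\ge -\epsilon_k\norm{d}^2$ for all $d$. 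I would set $\eta_i^k:=\rho_k$ to absorb the equality-constraint term and apply Lemma~\ref{lemma:akkt2_inequality} to the conic curvature term, which bounds $d^{\T}\big(\rho_k\sum_i Dg_i^{\T}V_i^k Dg_i\big)d$ from above by $d^{\T}\big(\sigma(x^k,\omega^k)+\sum_{i\in I_0}\theta_i^k Dg_i^{\T}Dg_i+\sum_{i\in I_B}\gamma_i^k(Dg_i^{\T}\Gamma_i\tilde{g}_i)(Dg_i^{\T}\Gamma_i\tilde{g}_i)^{\T}+\delta_k I_n\big)d$. Since this is an \emph{upper} bound on the curvature term, adding $d^{\T}(C_\sigma-C_V)d\ge 0$ to the algorithmic lower bound preserves the inequality and yields $d^{\T}(H+C_\sigma)d\ge -\epsilon_k\norm{d}^2$; moving $\epsilon_k I_n$ into the diagonal term (i.e.\ replacing $\delta_k$ by $\delta_k+\epsilon_k\to 0$) produces exactly the AKKT2 inequality of Definition~\ref{def:akkt2}.

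The main obstacle is reconciling the multiplier generated by the algorithm with the one for which Lemma~\ref{lemma:akkt2_inequality} is stated. The lemma assumes $\omega_i^k=\rho_k\Pi_{\mathcal{K}_i}(-g_i(x^k))$, whereas Step~4 defines $\omega_i^k=\Pi_{\mathcal{K}_i}(\hat{\omega}_i^k-\rho_k g_i(x^k))=\rho_k\Pi_{\mathcal{K}_i}(\hat{\omega}_i^k/\rho_k-g_i(x^k))$ by positive homogeneity of the conic projection, and it is this $\omega^k$ that appears in the sigma-term which must survive in the final inequality. Because $\{\hat{\omega}^k\}$ is uniformly bounded and $\rho_k\to+\infty$, the shift $\hat{\omega}_i^k/\rho_k\to 0$, so I would re-run the case analysis in the proof of Lemma~\ref{lemma:akkt2_inequality} (the cases $N_1,N_2,N_3$ and the matrix structure of Lemma~\ref{lemma:psd_matrix}) with this perturbed multiplier, controlling the bounded discrepancy it induces in $[\omega_i^k]_0$; this is also where Assumption~\ref{assum:bounded} is invoked, to guarantee $\delta_k\to 0$ in the critical case $k\in N_3$. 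Finally, the bounded-$\rho_k$ regime, in which Lemma~\ref{lemma:akkt2_inequality} does not formally apply, is the easier one: there $\rho_k$ is eventually constant and $\mu^k,\omega^k,V_i^k$ remain bounded, so the same construction for the curvature bound goes through with $\varphi_i^k\to 0$ automatically and no further assumption, and the second-order inequality passes to the limit with convergent multipliers.
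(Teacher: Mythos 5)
Your proposal is correct and follows essentially the same route as the paper's own proof: split on whether $\{\rho_k\}$ is bounded, identify $\grad_x L_{\rho_k}(x^k,\hat{\mu}^k,\hat{\omega}^k)$ with $\grad_x L(x^k,\mu^k,\omega^k)$ for the Step-4 multipliers, set $\eta_i^k:=\rho_k$, and combine the algorithmic test \eqref{eq:algorithm_target} with Lemma~\ref{lemma:akkt2_inequality} to dominate the conic curvature term, replacing $\delta_k$ by $\delta_k+\epsilon_k$, with the bounded-$\rho_k$ regime handled by re-running the lemma's estimates (where $\varphi_i^k\to 0$ holds automatically). You are in fact more careful than the paper on one point: the paper invokes Lemma~\ref{lemma:akkt2_inequality} verbatim even though Step~4 yields $\omega_i^k=\Pi_{\mathcal{K}_i}(\hat{\omega}_i^k-\rho_k g_i(x^k))$ rather than the lemma's $\rho_k\Pi_{\mathcal{K}_i}(-g_i(x^k))$, whereas you explicitly flag this mismatch and propose to re-run the $N_1,N_2,N_3$ case analysis with the perturbed multiplier (whose discrepancy is bounded by $\norm{\hat{\omega}_i^k}$ via nonexpansiveness of the projection), which is precisely what a fully rigorous version of the argument requires.
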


\begin{proof}  
  Let $x^*$ be an accumulation point of $\{ x^k \}$ and assume that $\{\rho_k\}$ is unbounded. From simple calculations, $\norm{\grad_x L_{\rho_k}(x^k, \hat{\mu}^k, \hat{\omega}^k)} \leq \epsilon_k$ implies $\norm{\grad_x L(x^k, \mu^k, \omega^k)} \leq \epsilon_k$ where $\mu^k := \hat{\mu}^k - \rho_k h(x^k)$ and $\omega_i^k := \Pi_{\mathcal{K}_i}(\hat{\omega}_i^k - \rho_k g_i(x^k))$. In addition, from Lemma~\ref{lemma:akkt2_inequality}, we obtain
\begin{multline*}
  d^{\T}\left(\sum_{i=1}^{r} \rho_{k}Dg_{i}(x^k)^{\T}V_{i}^{k}Dg_{i}(x^k)\right)d
  \leq
  d^{\T}\left(\sigma(x^k, \omega^k)
  +
  \sum_{i \in I_{0}(x^*)} \theta_{i}^{k} Dg_{i}(x^k)^{\T}Dg_{i}(x^k)\right. \\
  + \left.
  \sum_{i \in I_{B}(x^*)} \gamma_{i}^{k} Dg_{i}(x^k)^{\T}\Gamma_{i}\tilde{g}_{i}(x^k)(Dg_{i}(x^k)^{\T}\Gamma_{i}\tilde{g}_{i}(x^k))^{\T}
  + \delta_{k} I_n \right)d,
\end{multline*}
for all $d \in \R^n$ for sufficiently large $k$ and appropriate parameters $\{\theta_i^k\} \subset \R, \{\gamma_i^k\} \subset \R$ and $\R_{+} \supset \{\delta_i^k\} \to 0$. The above inequality, together with $\bar{\grad}^2_x L_{\rho_k}(x^k, \hat{\mu}^k, \hat{\omega}^k) + \red{\epsilon_k I_n \succeq 0}$, indicates
\begin{multline*}
    d^{\T}\Bigg(
    \grad^{2}_{x}L(x^k, \mu^k, \omega^k)
    + \sigma(x^k, \omega^k)
    + \sum_{i=1}^{p}\rho_k\grad h_i(x^k)\grad h_i(x^k)^{\T} \\
    + \sum_{i \in I_{B}(x^*)}
    \gamma_i
    (Dg_{i}(x^k)^{\T}\Gamma_{i}\tilde{g}_{i}(x^k))
    (Dg_{i}(x^k)^{\T}\Gamma_{i}\tilde{g}_{i}(x^k))^{\T} \\
    + \sum_{i \in I_{0}(x^*)} \theta_{i}^{k}Dg_{i}(x^k)^{\T}Dg_{i}(x^k) 
    + (\delta_k + \epsilon_k)I_n \Bigg)d \geq 0
\end{multline*}
for all $d \in \R^n$, which shows that $x^*$ is an AKKT2 point if~\eqref{eq:algorithm_target} holds. 

Assuming that $\{\rho_k\}$ is bounded, we can guarantee that $x^*$ is an AKKT point from~\cite[Theorem 5.2]{AN19}. Moreover, the second-order information can be proved using the same tools of Lemma~\ref{lemma:akkt2_inequality}. In fact, the inequalities that appear in that lemma are not affected by $\{ \rho_k \}$ being bounded. Therefore, we conclude that Algorithm~\ref{alg:socp_akkt2} generates AKKT2 sequences.
\end{proof}

\subsection{The sequential quadratic programming method}
Let us now consider an SQP method that generates AKKT2 points.
SQP methods are \red{iterative methods where, at each iteration, the search direction is determined by solving subproblems in which a quadratic model of the objective function is minimized} \red{subject to linearized constraints. To ensure global convergence, a penalty function is required. To address} ill-posed problems and generate well-defined and practical subproblems, we consider an algorithm based on a stabilized SQP method. This approach was first proposed for NLP in~\cite{WR98} and later extended to nonlinear semidefinite programming~\cite{YA22}, which includes SOCP.

In the following, we describe the outline of the algorithm. Let $k \in \mathbb{N}$ be the current iteration and $H_k \in \R^{n \times n}$ be the Hessian of the Lagrangian function or its approximation. For the current point $(x^k, \mu^k, \omega^k) \in \R^n \times \R^p \times \mathcal{K}$, the algorithm solves the following subproblem:
\begin{equation}
  \label{eq:sqp_subproblem}
  \begin{array}{ll}
    \mini{(\xi, \chi) \in \R^n \times \mathcal{K}} & \inner{\grad f(x^k) - Dh(x^k)s^k}{\xi} + \frac{1}{2} \inner{M_k \xi}{\xi} + \frac{\rho_k}{2} \norm{\chi}^2 \\[5pt]
    \mathrm{subject\ to\ } & Dg(x^k)\xi + \rho_k (\chi - t^k) \in \mathcal{K},
  \end{array}
\end{equation}
where $\rho_k > 0$ is the penalty parameter and
\begin{equation*}
  s^k := \mu^k - \frac{1}{\rho_k}h(x^k), \quad t^k := \omega^k - \frac{1}{\rho_k}g(x^k), \quad M_k := H_k + \frac{1}{\rho_k} \grad h(x^k) \grad h(x^k)^{\T}.
\end{equation*}
Note that (\ref{eq:sqp_subproblem}) always has a strictly feasible point $(\xi, \chi) = (0, t^k + (1, 0, \dots, 0)^{\T})$, which indicates that it fulfills Slater's constraint qualification. Moreover, if $M_k \succ O$ for each iteration, (\ref{eq:sqp_subproblem}) has a unique global minimizer. Using the solution of the subproblem $(\xi, \chi)$, we define the search direction $p^k$ and the candidates of Lagrange multipliers $(\bar{\mu}^{k+1}, \bar{\omega}^{k+1})$ as
\begin{equation*}
  p^k := \xi, \quad \bar{\mu}^{k+1} := \mu^k - \frac{1}{\rho_k}(h(x^k) + Dh(x^k)\xi), \quad \bar{\omega}^{k+1} := \chi.
\end{equation*}

Instead of choosing $(\bar{\mu}^{k+1}, \bar{\omega}^{k+1})$ as the new Lagrange multipliers immediately, we check their optimality by the so-called VOMF procedure~\cite{G113}. In this procedure, each iterate is classified into four groups, V-, O-, M- and F- iterates and we update the Lagrange multipliers in different ways, which enables global convergence.

To introduce VOMF procedure, we give some notations.
We extend the concept of $\epsilon$--active set for nonlinear programming and denote the sets of indices by
\begin{align*}
  I_{0, \epsilon}(x) & := \{i \in \{1, \dots, r\} \mid \norm{g_i(x)} \leq \epsilon\}, \\
  I_{B, \epsilon}(x) & := \{i \in \{1, \dots, r\} \mid |[g_i(x)]_0 - \norm{\overline{g_i(x)}}| \leq \epsilon, [g_i(x)]_0 > 0 \}, \\
  I_{BB, \epsilon}(x, \omega) & := \{i \in I_{B, \epsilon}(x) \mid |[\omega_i]_0 - \norm{\overline{\omega_i}}| \leq \epsilon, [\omega_i]_0 > 0\}.
\end{align*}
We define $r_V \colon \R^n \to \R_{+}$ and $r_O \colon \R^n \times \R^p \times \mathcal{K} \times \R_{+} \times \R_{+} \to \R_{+}$ with
\begin{align*}
  r_V(x) & := \norm{h(x)} + \norm{\Pi_{\mathcal{K}}(-g(x))}, \\
  r_O(x, \mu, \omega, \rho, \epsilon) & := \norm{\grad_x L(x, \mu, \omega)} + \red{\sum_{i=1}^r |g_i(x) \circ \omega_i|} + \max \big\{ \lambda_{\mathrm{max}}(-P(x, \mu, \omega, \rho, \epsilon)), 0 \big\},
\end{align*}
\red{where $g_i(x) \circ \omega_i$ is the Jordan product of $g_i(x)$ and $\omega_i$,} and
$\lambda_{\mathrm{max}}(-P(x, \mu, \omega, \rho, \epsilon))$ is the largest eigenvalue of \red{$- P(x, \mu, \omega, \rho, \epsilon)$, with}
\begin{align*}
  & P(x, \mu, \omega, \rho, \epsilon) := \\ & 
  \grad^{2}_{x}L(x, \mu, \omega) -
  \sum_{i \in I_{BB, \epsilon}(x, \omega)} \frac{[\omega_i]_0}{[g_i(x)]_0} Dg_{i}(x)^{\T}\Gamma_{i}Dg_{i}(x) +
  \sum_{i=1}^{p}\frac{1}{\rho}\grad h_i(x)\grad h_i(x)^{\T} \\
  & +
  \sum_{i \in I_{0, \epsilon}(x)} \frac{1}{\rho} Dg_{i}(x)^{\T}Dg_{i}(x) +
  \sum_{i \in I_{B, \epsilon}(x)} \frac{1}{\rho}
  (Dg_{i}(x)^{\T}\Gamma_{i}\tilde{g}_{i}(x))
  (Dg_{i}(x)^{\T}\Gamma_{i}\tilde{g}_{i}(x))^{\T}.
\end{align*}

First, we note that when $i \in I_B(x^*)$, the associate multiplier $\omega^*_i$ should be $0$ or in the boundary of $\mathcal{K}_i$ because of the complementarity condition. Thus, the sigma-term~\eqref{eq:sigma-term} at such $(x^*,\omega^*)$ only takes $i$ in $\{ i \in I_B(x^*) \mid \omega_i^* \in \mathrm{bd}^+(\mathcal{K}_i)\}$, which justifies the first summation of the above definition.
Furthermore, $r_V$ and $r_O$ indicate the violation of the constraints and the optimality measure with respect to the AKKT2 conditions, respectively. \red{The complementarity term in $r_O$ is actually related to CAKKT conditions, but CAKKT implies AKKT in SOCP~\cite[Theorem~4.2]{AN09}. One can also use $\inner{g(x)}{\omega}$ as the measure of complementarity, but this would be related to the so-called TAKKT. It is known, however, that TAKKT and AKKT are independent conditions.} We also define $\Phi(x, \mu, \omega, \rho, \epsilon) := r_V(x) + \kappa r_O(x, \mu, \omega, \rho, \epsilon)$ and $\Psi(x, \mu, \omega, \rho, \epsilon) := \kappa r_V(x) + r_O(x, \mu, \omega, \rho, \epsilon)$ where $\kappa \in (0, 1)$ is a weight parameter. Moreover, the merit function of the problem is denoted by
\begin{equation*}
  \tilde{F}(x; \rho, \mu, \omega) := f(x) + \frac{1}{2\rho} \norm{\rho\mu - h(x)}^2 + \frac{1}{2\rho} \norm{\Pi_{\mathcal{K}}(\rho\omega - g(x))}^2,
\end{equation*}
where $\rho > 0$ is a parameter. The gradient of $\tilde{F}$ with respect to $x$ is given by
\begin{equation*}
  \grad \tilde{F}(x; \rho, \mu, \omega) := \grad f(x) - Dh(x)^{\T}\left(\mu - \frac{1}{\rho}\right) + Dg(x)^{\T}\Pi_{\mathcal{K}}\left(\omega - \frac{1}{\rho}g(x)\right).
\end{equation*}
Using the above notations, constants $\mu_{\mathrm{max}} > 0,\ \omega_{\mathrm{max}} > 0$ and positive parameters $\phi_k, \psi_k, \gamma_k$ and $\epsilon_k$, we introduce VOMF procedure in Algorithm \ref{alg:vomf}.
\begin{algorithm}[htb]
  \caption{VOMF procedure}
  \label{alg:vomf}
  \begin{algorithmic}[1]
  \If{$\Phi(x^{k+1}, \bar{\mu}^{k+1}, \bar{\omega}^{k+1}, \rho_k, \epsilon_k) \leq \frac{1}{2}\phi_k$}
    \State (V-iterate) Set
      \begin{equation*}
        \mu^{k+1} := \bar{\mu}^{k+1},\ \omega^{k+1} := \bar{\omega}^{k+1},\ \phi_{k+1} := \frac{1}{2}\phi_k,\ \psi_{k+1} := \psi_k,\ \gamma_{k+1} := \gamma_k.
      \end{equation*}
  \ElsIf{$\Psi(x^{k+1}, \bar{\mu}^{k+1}, \bar{\omega}^{k+1}, \rho_k, \epsilon_k) \leq \frac{1}{2}\psi_k$}
    \State (O-iterate) Set
    \begin{equation*}
      \mu^{k+1} := \bar{\mu}^{k+1},\ \omega^{k+1} := \bar{\omega}^{k+1},\ \phi_{k+1} := \phi_k,\ \psi_{k+1} := \frac{1}{2}\psi_k,\ \gamma_{k+1} := \gamma_k.
    \end{equation*}
  \ElsIf{$\norm{\grad F(x^{k+1}; \rho_k, \mu^k, \omega^k)} \leq \gamma_k$}
    \State (M-iterate) Set
      \begin{align*}
        & \mu^{k+1} := \Pi_{C}\left(\mu^k - \frac{1}{\rho_k}h(x^{k+1})\right),
        \omega^{k+1} := \Pi_{D}\left(\omega^k - \frac{1}{\rho_k}g(x^{k+1})\right), \\
        & \phi_{k+1} := \phi_k,\ \psi_{k+1} := \psi_k,\ \gamma_{k+1} := \frac{1}{2}\gamma_k,
      \end{align*}
    where
    \begin{align*}
      & C := \{\mu \in \R^p \mid -\mu_{\mathrm{max}}e \leq \mu \leq \mu_{\mathrm{max}}e\}, \\
      & D := \{\omega = (\omega_1, \dots, \omega_r) \in \mathcal{K} \mid -\omega_{\mathrm{max}}e \leq \omega_i \leq \omega_{\mathrm{max}}e, i = 1, \dots, r\}.
    \end{align*}
  \Else
    \State (F-iterate) Set
      \begin{equation*}
        \mu_{k+1} := \mu_k,\ \omega_{k+1} := \omega_k,\ \phi_{k+1} := \phi_k,\ \psi_{k+1} := \psi_k,\ \gamma_{k+1} := \gamma_k.
      \end{equation*}
  \EndIf
  \end{algorithmic}
\end{algorithm}
If an iterate is classified as V- or O-iterate, we consider that $(x^{k+1}, \bar{\mu}^{k+1}, \bar{\omega}^{k+1})$ is approaching an AKKT2 point, therefore, we set $(\bar{\mu}^{k+1}, \bar{\omega}^{k+1})$ as the new Lagrange multipliers. If an iterate belongs to M-iterate, we update the Lagrange multipliers similarly to the augmented Lagrangian method since $\tilde{F}$ can be regarded as the augmented Lagrangian function and its value is decreasing. Otherwise, we consider that the candidates of Lagrange multipliers are inappropriate and we do nothing. We give a summary of SQP method in Algorithm \ref{alg:sqp} and then analyze its convergence.

\begin{algorithm}[htb]
  \caption{SQP method that generates AKKT2 sequences}
  \label{alg:sqp}
  \begin{algorithmic}[1]
  \State Set constants $\tau \in (0, 1), \alpha \in (0, 1), \beta \in (0, 1), \kappa \in (0, 1), \mu_{\mathrm{max}} > 0$ and $\omega_{\mathrm{max}} > 0$. Choose an initial point $(x_0, \mu_0, \omega_0)$ and parameters $\phi_0 > 0, \psi_0 > 0, \gamma_0 > 0, \rho_0 > 0, k := 0, \bar{\mu}_0 := \mu_0, \bar{\omega}_0 := \omega_0$ and $\{\epsilon_k\} \subset \R_{+}$ such that $\epsilon_k \to 0$.
  \State If $\norm{\grad F(x_k; \rho_k, \mu_k, \omega_k)} = 0$, set
    \begin{equation*}
      x^{k+1} := x^k,\ \bar{\mu}^{k+1} := \mu^k - \frac{1}{\rho_k}h(x^{k+1}),\ \bar{\omega}^{k+1} := \Pi_{\mathcal{K}}\left(\omega^k - \frac{1}{\rho_k}g(x^{k+1})\right),
    \end{equation*}
    and go to Step 5. Otherwise, go to Step 3.
  \State Choose $M_k \succ O$ and find the solution of (\ref{eq:sqp_subproblem}) $(\xi^k, \chi^k)$. Set
    \begin{equation*}
      p^k := \xi^k,\ \bar{\mu}^{k+1} := \mu^k - \frac{1}{\rho_k}(h(x^k) + Dh(x^k)\xi^k),\ \bar{\omega}^{k+1} := \chi^k.
    \end{equation*}
  \State Compute the smallest nonnegative integer $\ell_k$ which fulfills
    \begin{equation*}
      F(x^k + \beta^{\ell_k}p^k; \rho_k, \mu^k, \omega^k) \leq F(x^k; \rho_k, \mu^k, \omega^k) + \tau\beta^{\ell_k}\Delta_k,
    \end{equation*}
    where $\Delta_k := \inner{\grad F(x^k; \rho_k, \mu^k, \omega^k)}{p^k}$.
  Set $x^{k+1} := x^k + \beta^{\ell_k} p^k$.
  \State Compute $\mu^{k+1}, \omega^{k+1}, \phi_{k+1}, \psi_{k+1}$ and $\gamma_{k+1}$ by Algorithm \ref{alg:vomf}.
  \State If $\norm{\grad F(x^{k+1}; \rho^k, \mu^k, \omega^k)} \leq \gamma_k$, set $\rho_{k+1} := \rho_k/2$. Otherwise, set $\rho_{k+1} := \rho_k$.
  \State Set $k := k+1$ and go to Step 2.
  \end{algorithmic}
\end{algorithm}

\begin{theorem}
  \red{Let $\{x^k\}$ be a sequence generated by Algorithm~\ref{alg:sqp} and suppose that the following assumptions hold:}
  \begin{enumerate}[(a)]
  \item There exists a compact set which contains $\{x^k\}$.
  \item There exist positive constants $\nu_1$ and $\nu_2$ which satisfy
    \begin{equation*}
      \nu_1 \leq \lambda_{\mathrm{min}} \left(H_k + \frac{1}{\rho_k} \grad h(x^k) \grad h(x^k)^{\T}\right) \quad \mathrm{and} \quad \lambda_{\mathrm{max}}(H_k) \leq \nu_2,
    \end{equation*}
    for all $k$, where $\lambda_{\max}$ and $\lambda_{\min}$ mean the largest and the smallest eigenvalues of the corresponding matrices.
  \item Let $x^*$ be an accumulation point of $\{x^k\}$. \red{Then, $Q$ defined in~\eqref{eq:Q} satisfies the generalized Lojasiewicz inequality at $x^*$.}
  \item Let $x^*$ be an accumulation point of $\{x^k\}$ and $\{\epsilon_k\} \subset \R_{+}$ such that $\epsilon_k \to 0$. Then, for sufficiently large $k$, we obtain $I_{0, \epsilon_k}(x^k) = I_0(x^*)$ and $I_{B, \epsilon_k}(x^k) = I_B(x^*)$.
\end{enumerate}
  \red{Then, any accumulation point of $\{x^k\}$ either satisfies the AKKT2 \red{(and CAKKT2)} conditions, or it is an infeasible point of~\eqref{eq:socp}, but a stationary point of the feasibility measure~$Q$.}
\end{theorem}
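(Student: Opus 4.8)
The plan is to fix an accumulation point $x^*$ together with a subsequence $\{x^k\}_{k \in K} \to x^*$, and to drive the whole argument through the bookkeeping of the VOMF procedure (Algorithm~\ref{alg:vomf}). Recall that $\phi_k$ is halved only at V-iterates, $\psi_k$ only at O-iterates, and $\gamma_k$ only at M-iterates, so all three are nonincreasing, and every index $k$ belongs to exactly one of the four classes. First I would settle the well-definedness issues using assumptions (a) and (b): by (b) we have $M_k = H_k + \frac{1}{\rho_k}\grad h(x^k)\grad h(x^k)^\T \succ O$ with uniformly bounded spectrum, so each subproblem~\eqref{eq:sqp_subproblem} has a unique solution and, together with compactness in (a), the line search in Step~4 produces a genuine decrease of the merit function $\tilde F$, which is bounded below on the compact set. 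The case split is then organized by which iterate class is infinite along $K$, which will coincide with whether $x^*$ is feasible.

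\textbf{Feasible/optimal case (infinitely many V- or O-iterates).} If V-iterates occur infinitely often then $\phi_k \to 0$, and if O-iterates occur infinitely often then $\psi_k \to 0$. Since $\Phi = r_V + \kappa r_O$ and $\Psi = \kappa r_V + r_O$ are sums of the nonnegative quantities $r_V$ and $r_O$ with $\kappa \in (0,1)$, either limit forces both $r_V \to 0$ and $r_O \to 0$ along the corresponding subsequence. From $r_V \to 0$ we get $h(x^*) = 0$ and $\Pi_{\mathcal K}(-g(x^*)) = 0$, i.e.\ $x^*$ is feasible. From $r_O \to 0$ we read off, termwise, that $\grad_x L(x^k,\mu^k,\omega^k) \to 0$, that $\sum_i |g_i(x^k)\circ\omega_i^k| \to 0$, and that $\max\{\lambda_{\max}(-P),0\} \to 0$. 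Here assumption (d) is what makes the second-order term usable: because $I_{0,\epsilon_k}(x^k) = I_0(x^*)$ and $I_{B,\epsilon_k}(x^k) = I_B(x^*)$ for large $k$, the matrix $P$ is exactly the AKKT2 matrix of Definition~\ref{def:akkt2} with $\eta_i^k = \theta_i^k = 1/\rho_k$, with the $I_{BB,\epsilon_k}$ sum playing the role of the sigma-term~\eqref{eq:sigma-term}, and with $\delta_k := \max\{\lambda_{\max}(-P),0\} \to 0$. Thus $x^*$ satisfies the complementarity and second-order conditions, i.e.\ it is a CAKKT2 point, and since CAKKT implies AKKT it is also an AKKT2 point.

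\textbf{Infeasible/penalty-driven case (finitely many V- and O-iterates).} If only finitely many V- and O-iterates occur, then $\phi_k$ and $\psi_k$ are eventually constant and bounded away from $0$, so $\Phi$ and $\Psi$ cannot tend to $0$. I would then show that $\rho_k \to 0$: if not, $\rho_k$ is eventually constant, Step~6 never triggers, and either $\gamma_k$ stays bounded below while the summable merit decrease forces $\norm{\grad\tilde F} \to 0$ (a contradiction), or M-iterates occur infinitely often with $\gamma_k \to 0$ and hence again $\norm{\grad\tilde F} \to 0$. In every surviving situation we have, along $K$, both $\rho_k \to 0$ and $\norm{\grad\tilde F(x^k;\rho_k,\mu^k,\omega^k)} \to 0$, while the multipliers produced at M-iterates are projected onto the compact sets $C$ and $D$ and are therefore bounded. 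Scaling the gradient of $\tilde F$ by $\rho_k$ and using positive homogeneity of $\Pi_{\mathcal K}$, the terms carrying the bounded multipliers vanish and one is left with $Dh(x^*)^\T h(x^*) - \sum_i Dg_i(x^*)^\T \Pi_{\mathcal K_i}(-g_i(x^*)) = 0$, i.e.\ $\grad Q(x^*) = 0$, so $x^*$ is a stationary point of the feasibility measure~$Q$ from~\eqref{eq:Q}. If $Q(x^*) = 0$ then $x^*$ is feasible, and combining this first-order limit with the second-order parameters produced by Lemma~\ref{lemma:akkt2_inequality} (whose hypothesis Assumption~\ref{assum:bounded} is supplied here by (c) through Lemma~\ref{lem:los}, which keeps the scaled multipliers controlled) upgrades the conclusion to AKKT2/CAKKT2; if $Q(x^*) > 0$ then $x^*$ is the claimed infeasible stationary point of $Q$.

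\textbf{Main obstacle.} The hard part will be the second-order block indexed by $I_B(x^*)$ when $g_i(x^k)$ approaches the boundary from outside the cone, exactly the situation that required the matrix-structure computation of Lemma~\ref{lemma:psd_matrix} inside Lemma~\ref{lemma:akkt2_inequality}; here one must check that aligning $P$ with the AKKT2 matrix via (d), and bounding the relevant multiplier components via (c) and Lemma~\ref{lem:los}, is enough to keep $\delta_k \to 0$ and the sigma-term well-defined as $[g_i(x^k)]_0 \to \norm{\overline{g_i(x^k)}}$. A secondary difficulty is the interaction between the penalty update in Step~6 and the VOMF counters: one must rule out the iterates stalling indefinitely as F-iterates without progress, which I would handle with the descent property of the line search in Step~4 under assumptions (a) and (b).
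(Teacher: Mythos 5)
Your proposal is correct and follows essentially the same route as the paper's proof: the identical case split on whether the VOMF procedure produces infinitely many V- or O-iterates, with $\Phi \le \tfrac12\phi_k$ (resp.\ $\Psi \le \tfrac12\psi_k$) forcing $r_V \to 0$ and $r_O \to 0$, hence feasibility, the CAKKT conditions, and the second-order inequality of Definition~\ref{def:akkt2} (assumption (d) aligning $P$ with the AKKT2 matrix and $\delta_k := \max\{\lambda_{\mathrm{max}}(-P),0\} \to 0$), while the remaining case yields a stationary point of $Q$. The only real difference is one of economy: the paper outsources the second case, including the impossibility of having finitely many M-iterates but infinitely many F-iterates, to \cite[Theorems~5 and~6]{YA22}, whereas you sketch those arguments directly (descent of the merit function under (a)--(b), $\rho_k \to 0$, bounded projected multipliers, and scaling $\grad \tilde{F}$ by $\rho_k$ to obtain $\grad Q(x^*) = 0$). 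One caution: your attempt to upgrade a feasible limit in the M-iterate case to AKKT2 via Lemma~\ref{lemma:akkt2_inequality} is shaky, since no second-order condition is enforced at M- or F-iterates for that lemma to convert and its hypotheses assume the penalty structure $\omega_i^k = \rho_k \Pi_{\mathcal{K}_i}(-g_i(x^k))$ with $\rho_k \to +\infty$; however, the paper's own proof does not treat this corner case at all, so this extra step is not needed to match the published argument.
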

  
\begin{proof}
  \red{The proof is similar to the one in~\cite[Theorem~6]{YA22}.} Suppose that the algorithm generates infinitely many V- or O-iterates and without loss of generality let $\{x^k\} \to x^*$. Then, both $r_V$ and $r_O$ converge to $0$. \red{Therefore, $x^*$ is a CAKKT point satisfying also the second-order condition of AKKT2. Since CAKKT implies AKKT~\cite[Theorem~4.2]{AN19}, $x^*$ satisfies AKKT2 too.} Even if the algorithm generates a finite number of V- or O-iterates, it is shown that a situation that the number of M-iterate is finite whereas that of F-iterate is infinite never occurs~\cite[Theorem 5]{YA22} and $\{x^k\}$ converges to a stationary point of the problem which is related to constraints of the original problem~\cite[Theorem 6]{YA22}.
\end{proof}

In the above theorem, observe that assumptions (a)--(c) are the same ones used in~\cite{YA22}. Assumption (d) indicates that $\{x^k\}$ is updated stably near limit points.

\section{Conclusion}
\label{sec:conclusion}
In this paper, we propose a second-order sequential optimality condition for SOCP called AKKT2, which extends the AKKT2 of NLP. We show that a local minimum of SOCP satisfies the condition under a weak assumption. We also consider its relation with the existing optimality condition WSONC and propose algorithms that generate AKKT2 points. \red{Moreover, similar results are obtained for CAKKT2.} As an already ongoing future work, we cite the proposal of AKKT2 conditions for nonlinear semidefinite programming and nonlinear symmetric 
conic optimization. It is also important to check the necessity of Assumption~\ref{assum:bounded} in the main theorem. \red{Another important work is to define strict CQs for our AKKT2, i.e., the weakest CQ that one can assume that would recover KKT.}

%=============================================================================

\section*{Acknowledgements} 
This work was supported by the Grant-in-Aid for Scientific Research (C) 
(19K11840) from Japan Society for the Promotion of Science. We are also grateful
to Yuya Yamakawa and Gabriel Haeser for the valuable discussions.

%=============================================================================

%\bibliographystyle{plain}
%\bibliography{journal-titles,references}

\printbibliography

\end{document}